	\newtheorem{thm}{Theorem}
	\newtheorem{defn}[thm]{Definition}
	\newtheorem{exmp}[thm]{Example}
	\newcommand{\disjU}{\mathbin{\dot{\cup}}}
	\title[Instrumental Processes]{Instrumental Processes Using Integrated 
	Covariances}
\begin{document}
		\maketitle
		
		\begin{abstract}
		Instrumental variable methods are often used for parameter estimation 
		in the presence of confounding. They can also be applied in stochastic 
		processes. Instrumental variable analysis exploits moment 
		equations to obtain estimators for causal parameters. We show that in 
		stochastic processes one can find such moment equations using an 
		integrated covariance matrix. This provides new instrumental variable 
		methods, instrumental variable methods in a class of continuous-time 
		processes as well as a unified treatment of discrete- and 
		continuous-time processes.
		\end{abstract}
		
		\begin{keywords}
			instrumental variables, point processes, linear Hawkes 
			processes, VAR(p), time series, causal inference, recurrent events
		\end{keywords}

		\section{Introduction}\label{sec:intro}
		
		Instrumental variable (IV) techniques have a long history in economics, 
		engineering, and causal inference, even if each field has its own 
		standard formulation of the IV problem
		\citep{wright1928tariff, 
		reiersol1941confluence,reiersol1945confluence,sargan1958estimation,
		joseph1961plant, wong1966estimation, 
		wong1967identification}. Recent work \citep{thams2022} 
		formulates an instrumental variable problem in a (discrete-time) time 
		series model and 
		provides 
		a solution which employs conditional instruments 
		\citep{brito2002generalized}. \cite{thams2022} take 
		a \emph{variable-centric} approach in that they identify sets of 
		variables at different lags that satisfy conditions enabling 
		conditional instrumental variable techniques. This paper takes a 
		\emph{process-centric} approach, essentially by integrating out time. 
		The IV methods of this paper therefore only use integrated measures of 
		covariance of stochastic processes. The distinction between variable- 
		and process-centric will 
		be described in more detail in Section \ref{sec:ivm}. 
		
		The process-centric approach outlined in this paper is applicable to 
		discrete-time stochastic processes and can also be applied in 
		continuous time as we show using a class of point processes. The 
		estimand is slightly different than in existing methods, however, the 
		estimated parameter is easily interpretable and it summarizes the 
		strength 
		of the dependence between stochastic processes.
	
		As the paper uses both discrete- and continuous-time models, we only 
		use the term \emph{time series} to refer to stochastic processes in 
		discrete time. The paper is structured as follows. Section 
		\ref{sec:ivm} describes a 
		classical instrumental variable problem as well as the variable-centric 
		and process-centric approaches to IV estimation in time 
		series. Section \ref{sec:pm} describes the causal estimands 
		that 
		our IV equations identify. Section \ref{sec:ip} describes IV methods in 
		both linear Hawkes processes 
		and vector-autoregressive time series, however, we focus first on the 
		time series and the description of the linear Hawkes case is found in 
		Section \ref{sec:hawkes}. In both cases, we use an integrated 
		covariance matrix to obtain new IV results and there is a strong 
		conceptual similarity between the two, even though the interpretation 
		of 
		the parameters depends on the model class. We also generalize the 
		results in the time series setting to allow more general 
		confounding and more general instrumental processes (Section 
		\ref{sec:confoundingts}). 
		Section \ref{sec:est} 
		discusses estimation for time series models.

		\section{Instrumental Variable Methods}
		\label{sec:ivm}

								\begin{figure}
									\begin{minipage}[.7\textheight]{0.4\linewidth}
										\centering
										\begin{tikzpicture}[scale=0.7]
										\tikzset{vertex/.style
											= 
											{shape=circle,draw,minimum 
											size=1.5em, 
												inner sep = 0pt}}
										\tikzset{edge/.style= {->,> = 
										latex',thick}}
										\tikzset{edgebi/.style= {<->,> = 
												latex', 
												thick}}
										\tikzset{every 
											loop/.style={min distance=8mm, 
											looseness=5}}
										\tikzset{vertexFac/.style= 	
											{shape=rectangle,draw,minimum 
											size=1.5em, 
												inner sep = 
												0pt}}
										
										%
										\def\y{-2}						
										\node[vertex] (i) at (-4,0+\y) 	{$I$};
										\node[vertex] (a) at  	(-2,0+\y) {$A$};
										\node[vertex] (b) 	at  (2,0+\y) 	
										{$B$};
										\node[vertexFac] 	(u) 	at  	
										(0,1.25+\y)	{$U$};
										
										\node at 
										(-3.5,1.75+\y) {\Large 
											\textbf{A}};
										
										
										\draw[edge] (i) to 
										(a);
										\draw[edge] (a) to 
										(b);				
										\draw[edge] (u) to 
										(a);
										\draw[edge] (u) to 
										(b);	
										
										\def\x{-6}
										\node[vertex] (i2) at (-4,0+\x) 	
										{$I$};
										\node[vertex] (a2) at  	(-2,0+\x) {$A$};
										\node[vertex] (b2) 	at  (2,0+\x) 	
										{$B$};
										\node[vertexFac] 	(u2) 	at  	
										(0,1.25+\x)	
										{$U$};
										
										\node at 
										(-3.5,1.75+\x) {\Large 
											\textbf{B}};
										
										
										\draw[edge] (i2) to 
										(a2);
										\draw[edge] (a2) to 
										(b2);				
										\draw[edge] (u2) to 
										(a2);
										\draw[edge] (u2) to 
										(b2);
										\draw[edge, bend left] (b2) to 
										(a2);

										\end{tikzpicture}
									\end{minipage}\hspace{.05\linewidth}%
									\begin{minipage}{0.5\linewidth}
										\centering
										\begin{tikzpicture}[scale=0.9]
										\tikzset{vertex/.style = 
											{shape=circle,minimum 
												size=1.5em, 
												inner 
												sep = 0pt}}
										\tikzset{edge/.style = {->,> = latex', 
												thick}}
										\tikzset{edgebi/.style = {<->,> = 
												latex', 
												thick}}
										\tikzset{every loop/.style={min 
												distance=8mm, 
												looseness=5}}
										\tikzset{vertexFac/.style = 
											{shape=rectangle,draw,minimum 
												size=1.5em, 
												inner sep = 0pt}}
										
										\node[vertex] (i1) at  (-4,0) 
										{$X_{t-3}^I$};
										\node[vertex] (i2) at  (-2,0) 
										{$X_{t-2}^I$};
										\node[vertex] (i3) at  (0,0) 
										{$X_{t-1}^I$};
										\node[vertex] (i4) at  (2,0) 
										{$X_{t}^I$};
										\node[vertex] (a1) at  (-4,-2) 
										{$X_{t-3}^A$};
										\node[vertex] (a2) at  (-2,-2) 
										{$X_{t-2}^A$};
										\node[vertex] (a3) at  (0,-2) 
										{$X_{t-1}^A$};
										\node[vertex] (a4) at  (2,-2) 
										{$X_{t}^A$};
										\node[vertex] (b1) at  (-4,-4) 
										{$X_{t-3}^B$};
										\node[vertex] (b2) at  (-2,-4) 
										{$X_{t-2}^B$};
										\node[vertex] (b3) at  (0,-4) 
										{$X_{t-1}^B$};
										\node[vertex] (b4) at  (2,-4) 
										{$X_{t}^B$};
										\node[vertex] (u1) at  (-4,-6) 
										{$X_{t-3}^U$};
										\node[vertex] (u2) at  (-2,-6) 
										{$X_{t-2}^U$};
										\node[vertex] (u3) at  (0,-6) 
										{$X_{t-1}^U$};
										\node[vertex] (u4) at  (2,-6) 
										{$X_{t}^U$};

										\node at (-5.5,0.25) {\Large 
											\textbf{C}};
										
										\draw[edge] (i1) to 
										(i2);
										\draw[edge] (i2) to  
										(i3);
										\draw[edge] (i3) to 
										(i4);
										\draw[edge] (a1) to 
										(a2);
										\draw[edge] (a2) to  
										(a3);
										\draw[edge] (a3) to 
										(a4);
										\draw[edge] (b1) to 
										(b2);
										\draw[edge] (b2) to  
										(b3);
										\draw[edge] (b3) to 
										(b4);
										\draw[edge] (u1) to 
										(u2);
										\draw[edge] (u2) to  
										(u3);
										\draw[edge] (u3) to 
										(u4);
										
										\draw[edge] (i1) to 
										(a2);
										\draw[edge] (i2) to  
										(a3);
										\draw[edge] (i3) to 
										(a4);
										\draw[edge] (a1) to 
										(b2);
										\draw[edge] (a2) to  
										(b3);
										\draw[edge] (a3) to 
										(b4);
										\draw[edge] (u1) to 
										(a2);
										\draw[edge] (u2) to  
										(a3);
										\draw[edge] (u3) to 
										(a4);
										\draw[edge] (u1) to 
										(b2);
										\draw[edge] (u2) to  
										(b3);
										\draw[edge] (u3) to 
										(b4);
										\draw[edge] (b1) to 
										(a2);
										\draw[edge] (b2) to  
										(a3);
										\draw[edge] (b3) to 
										(a4);
										
										\end{tikzpicture}
									\end{minipage}
									
									\caption{Graphical representations of the 
										examples in Section \ref{sec:ivm}. 
										\textbf{A}: Graph representing the IV 
										model 
										in Example \ref{exmp:classicalIV}. Each 
										node $Y \in \{I,A,B,U \}$ 
										represents a random 
										variable in the model. \textbf{B}: 
										Graph 
										representing the time series IV model 
										in 
										Example \ref{exmp:tsIV}. Each node 
										$Y \in \{I,A,B,U \}$ 
										represents 
										a coordinate process, i.e., 
										$(X_t^Y)_{t\in\mathbb{Z}}$. 
										\textbf{C}: An 
										\emph{unrolled} version of 
										\textbf{B}
										representing 
										the time 
										series IV 
										model \citep{danks2013}. Each node 
										represents a random 
										variable. The analogous graph with a 
										node 
										for every random variable in the time 
										series is 
										known 
										as the \emph{full time graph} 
										\citep{peters2013causal}.}
									\label{fig:IVgraphs}
								\end{figure}
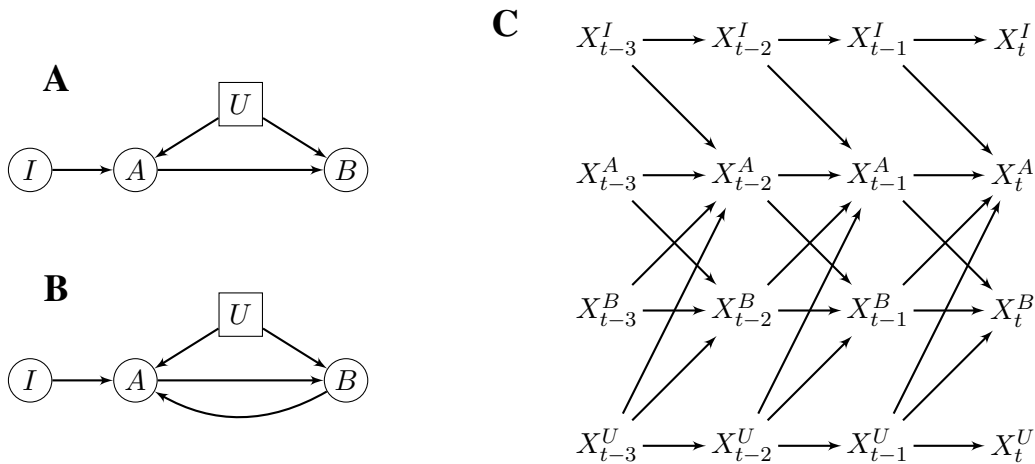

		In this section, we give an example of a classical IV problem, that is, 
		using variables that are not indexed by time. We then compare this to a 
		simple vector-autoregressive model of order 1, VAR(1). In this model, 
		we explain the \emph{variable}- and \emph{process}-centric approaches 
		to 
		IV estimation and show how the integrated covariance enables IV 
		estimation. We assume zero-mean random variables as the generalization 
		is straightforward.
		
				\begin{exmp}[Classical IV]
					\label{exmp:classicalIV}
					Assume we have observable, zero-mean random 
					variables, $I, A, B$, and
					
					$$
					B = \phi A + \varepsilon
					$$
					
					\noindent where $\varepsilon$ is a zero-mean random 
					variable. We wish to estimate $\phi \in \mathbb{R}$. If 
					$\varepsilon$ and $A$ are correlated, then least-squares 
					estimation is biased. If $I$ is 
					uncorrelated with $\varepsilon$ and 
					$E(AI) \neq 0$, then we say that $I$ is an 
					\emph{instrumental variable}. Multiplying by $I$, and 
					taking 
					expectations, we obtain
					
					\begin{align}
						E(BI) = \phi E(AI).
						\label{eq:classical}
					\end{align}
					
					\noindent This moment equation identifies the parameter 
					$\phi$ as $E(AI) 
					\neq 0$. This is also true when $A$ and $\varepsilon$ are 
					correlated, for 
					instance, due to an unobserved confounder, $U$, see Figure 
					\ref{fig:IVgraphs}\textbf{A}. 
				\end{exmp}
				
					From the above it is clear that the parameter $\phi$ is in 
					fact 
					identified from the covariance matrix of the vector 
					$(I,A,B)^T$, that is, the observed covariance matrix is 
					sufficient for IV estimation. The central idea of this 
					paper is to use a different observable matrix in a 
					stochastic 
					process setting which is shown to be sufficient for IV 
					estimation. 
					The next example illustrates this in a simple manner.
		
		\begin{exmp}[Time series IV]
			\label{exmp:tsIV}
					We consider a time series model with a 
					structure which is similar to that in Example 
					\ref{exmp:classicalIV}. Let $X_t = 
					(X_t^I, X_t^A, X_t^B, X_t^U)^T$ such that 
					$X_t^U$ is unobserved and processes $X_t^I, X_t^A, X_t^B, 
					X_t^U$ are all one-dimensional and zero-mean. For 
					simplicity, we assume 
					$X_t$ to 
					be a vector-autoregressive process of order 1 (VAR(1)),
					
					\begin{align*}
						X_t = \Phi X_{t-1} + \varepsilon_t,
					\end{align*}
					
					\noindent where $\varepsilon_t$ are identically 
					distributed and independent random vectors with independent 
					entries. The matrix $\Phi$ has the following structure,
					
					\begin{align*}
						\Phi &= \begin{bmatrix}
							\Phi_{II}       & 0 & 0 & 0 \\
							\Phi_{AI}       & \Phi_{AA} & \Phi_{AB} & \Phi_{AU} 
							\\
							0 & \Phi_{BA} & \Phi_{BB} & \Phi_{BU} \\
							0      & 0 & 0 &\Phi_{UU}
						\end{bmatrix}.
					\end{align*}
					
					We assume that each entry of $\Phi$ is nonzero if it is 
					not explicitly zero above. There is a 
					graphical representation of this process in Figure 
					\ref{fig:IVgraphs}\textbf{B} where $Z \rightarrow Y$ if and 
					only if $\Phi_{YZ} \neq 0$ for $Z,Y \in \{I,A,B,U \}$, 
					$Z\neq Y$. 
					In Figure 
					\ref{fig:IVgraphs}, graph \textbf{C} is an \emph{unrolled} 
					version 
					 of graph \textbf{B} where the nodes 
					represent random variables and $X_{t-1}^Z \rightarrow 
					X_t^Y$ if and only if $\Phi_{YZ} \neq 0$ \citep{danks2013}.
					
					If we were to apply the approach from Example 
					\ref{exmp:classicalIV}, we could consider using $X_{t-2}^I$ 
					as an 
					instrument to identify the parameter $\Phi_{BA}$ which 
					corresponds to the 
					edge 
					$X_{t-1}^A \rightarrow X_{t}^B$ and write
					
					\begin{align*}
						X_t^B &= \Phi_{BA}X_{t-1}^A + \Phi_{BB}X_{t-1}^B + 
						\Phi_{BU}X_{t-1}^U + 
						\varepsilon_t^B \\
						& =  \Phi_{BA}X_{t-1}^A + 
						\bar{\varepsilon}_t^B \\
						E(X_t^BX_{t-2}^I) & = \Phi_{BA} E(X_{t-1}^AX_{t-2}^I) + 
						E(\bar{\varepsilon}_t^BX_{t-2}^I)
					\end{align*}
					
					\noindent where $\bar{\varepsilon}_t^B = \Phi_{BB}X_{t-1}^B 
					+ 
					\Phi_{BU}X_{t-1}^U + 
					\varepsilon_t^B$. 
					\cite{thams2022} (Proposition 6) show that using the moment 
					equation in (\ref{eq:classical}), with $I = X_{t-2}^I, A = 
					X_{t-1}^A, B = X_{t}^B$, does not lead to 
					consistent estimation of $\Phi_{BA}$ when both $\Phi_{II}$ 
					and $\Phi_{BB}$ 
					are nonzero. Therefore, naive application of classical IV 
					methods will not give consistent estimation in this 
					problem. This can be explained by the fact that 
					there are 
					confounding paths going back in time, e.g., $X_{t-2}^I 
					\leftarrow X_{t-3}^I \rightarrow X_{t-2}^A \rightarrow 
					X_{t-1}^B \rightarrow X_{t}^B$, corresponding to the fact 
					that $	E(\bar{\varepsilon}_t^BX_{t-2}^I)$ is not 
					necessarily zero.

					\cite{thams2022} instead provide 
					consistent 
					estimators of $\Phi_{BA}$ using \emph{conditional} 
					instrumental 
					variables, using a conditional version of the moment 
					equation in Equation (\ref{eq:classical}). In this case, 
					$I_{t-2}$ is a conditional instrument for the parameter 
					$\Phi_{BA}$ \emph{conditionally} on $X_{t-3}^I$  
					\citep[Theorem 7]{thams2022}.
		\end{exmp}

		The conditional instrumental variable approach is 
		\emph{variable-centric} in the sense that it identifies finite sets of 
		variables that satisfy assumptions of a conditional instrumental 
		variable method as in the above example. In this 
		paper, we take a different approach which will also provide a 
		solution to the instrumental variable problem above. 
		Instead of looking at covariances of single variables, e.g., 
		between $X_t^B$ and $X_{t-2}^I$, we use an integrated 
		measure of 
		covariance, summing out temporal dependence. Taking this point of view, 
		we arrive at an unconditional 
		instrumental variable method in the above example, and we say that this 
		is a \emph{process-centric} approach as it uses the integrated 
		covariance. The rest of this section describes this idea in the 
		VAR(1)-example, though we no longer require $X_t^I, X_t^A, X_t^B,$ and 
		$X_t^U$ to be one-dimensional.
		
		Assume that the observed variables are mean-zero and that 
		the largest absolute value of the eigenvalues of $\Phi$ is strictly 
		less than one. In the model from Example \ref{exmp:tsIV}, we see that 
		for a fixed $t$, and using that $\varepsilon_{t-j}$ and 
		$\varepsilon_{t+k-l}$ are independent unless $k = l - j $,
		
		\begin{align}
			C &= \sum_{k = -\infty}^\infty E\left(X_t X_{t+k}^T\right) = 
			\sum_{k 
				= -\infty}^\infty E\left(\left(\sum_{j= 0}^\infty 
			\Phi^j\varepsilon_{t-j}\right) \left(\sum_{l= 0}^\infty 
			\Phi^l\varepsilon_{t+k-l}\right)^T\right) \nonumber \\
			&= \left(\sum_{j= 0}^\infty 
			\Phi^j\right) \Theta \left(\sum_{l= 0}^\infty 
			\Phi^l\right)^T = (I -\Phi)^{-1}\Theta (I- \Phi)^{-T}
			\label{eq:intCovIntro}
		\end{align}

		\noindent where $\Theta$ is the diagonal covariance matrix of 
		$\varepsilon_t$. This result also follows from standard VAR-process 
		results \citep[see the covariance matrix generating 
		function]{brockwell2009time}. We 
		will say that Equation (\ref{eq:intCovIntro}) is the \emph{integrated 
		covariance equation}. More 
		general time series models and the linear Hawkes model also satisfy 
		this equation when the 
		parameter matrices are given the correct interpretations. There is also 
		a clear similarity with the parametrization of the observed 
		covariance of a linear structural equation model as noted by 
		\cite{mogensen2022equality}  in the linear 
		Hawkes model. Therefore, more general
		identification results from cyclic linear structural equation models 
		may be used \citep{mogensen2022equality}. 
		
		One can straightforwardly show that $(I - \Phi_{BB})^{-1}\Phi_{BA} = 
		C_{BI}(C_{AI})^{-1}$ when $C_{AI}$ is invertible, thus identifying the 
		matrix $(I - \Phi_{BB})^{-1}\Phi_{BA}$ of \emph{normalized parameters} 
		(Subsection \ref{ssec:norm}). This matrix has a clear causal 
		interpretation, 
		summarizing the direct influence of one subprocess on 
		another (see Subsections \ref{ssec:norm} and \ref{ssec:hawkesNorm}). In 
		the following sections, we show that this approach also 
		applies to 
		more 
		general time series models as well as to 
		linear Hawkes processes, a class of multivariate, continuous-time point 
		processes.

		\section{Probabilistic Models}
		\label{sec:pm}
		
		In this section, we introduce the VAR(p)-models while we defer the 
		introduction of the 
		\emph{linear Hawkes processes} until Section \ref{sec:hawkes}. In both 
		cases, a version of Equation 
		(\ref{eq:intCovIntro}) is satisfied 
		which enables the instrumental variable methods of Section 
		\ref{sec:ip}. This section also describes \emph{normalized parameters} 
		in more 
		detail as these will constitute our estimands. It may seem surprising 
		that the same instrumental variable technique applies to both time 
		series and continuous-time point processes, however, similar parallels 
		have been studied in other work \citep{brillinger1994time}. Despite 
		the strong connections between the two settings, the linear Hawkes 
		processes are only introduced at the end of the paper to obtain a 
		simpler presentation.  We do, however, compare the 
		two cases 
		throughout the paper to highlight the similarities and differences.

		\subsection{Time Series}
		\label{ssec:ts}
				
	Let $X_t = (X_t^1,\ldots,X_t^n)^T$ be a multivariate time series in 
	discrete time, $t \in \mathbb{Z}$. We assume that $X_t$ is stationary and 
	that $E((X_t^i)^2)<\infty$ for all $i$ and $t$. We 
	say that $X_t$ is a 
	VAR(p)-process if
	
	\begin{align}
		X_t &= \sum_{k= 1}^p \Phi_k X_{t-k} + \varepsilon_t
		\label{eq:VARp}
	\end{align}
	
	\noindent where the $\varepsilon$-process is mean-zero and stationary, 
	$\varepsilon_t$ and $\varepsilon_s$ are uncorrelated for $s\neq t$, and 
	$E(\varepsilon_t \varepsilon_t^T) = \Theta$. Define $\Phi(z) = I - \Phi_1 z 
	- \ldots - \Phi_p z^p$. We assume 
	that 
	$\mathrm{det}(\Phi(z))\neq 0$ for all $z \in \mathbb{C}$ such that 
	$\vert z \vert \leq 1$. This means that there exists a unique 
	stationary solution to the VAR(p)-equation \citep[Theorem 
	11.3.1]{brockwell2009time} and we assume throughout that we observe 
	a stationary time series. We use the notation $\Phi = 
	\sum_{i=1}^p 
	\Phi_i$. The above assumption on $\Phi(z)$ implies that $I - 
	\Phi$ is invertible. The entries of the matrix $(I - \Phi)^{-1}$ are 
	sometimes called \emph{long-run effects} \citep{lutkepohl2005new}. We also 
	assume 
	that $I - \Phi_{BB}$ is invertible when needed, see also Subsection 
	\ref{ssec:norm}. The \emph{spectral radius}, $\rho(A)$, of a square matrix 
	$A$ is the largest absolute value of its eigenvalues, and we assume 
	$\rho(\Phi) < 1$.
	
	We define the \emph{integrated 
		covariance} of the time series $X_t$,
	
\begin{align*}
	C = \sum_{k=-\infty}^\infty E(X_t X_{t+k}^T) - E(X_t)E(X_{t+k})^T.
\end{align*}

	\noindent 	The matrix $C$ is well-defined since the sum converges 
	\citep[p. 420]{brockwell2009time}. \cite{brockwell2009time} (Section 11.2) 
	discuss estimation 
	of 
	the terms $E\left(X_t X_{t+i}^T\right)$. The matrix $C$ is independent of 
	$t$ due to 
	stationarity. One should also note that the matrix $C$ equals $2\pi$ times 
	the spectral density matrix of 
	$X_t - E(X_t)$ at 0. 
	
	We saw in Section \ref{sec:ivm} that the integrated 
	covariance equation holds for VAR(1)-processes and we can extend this 
	result to VAR(p)-processes. This is done in Appendix \ref{app:intCovVARp}. 
	We obtain the same integrated covariance equation, however,
		$\Phi$ is now the sum of the direct effects for each lag $k = 
		1,\ldots,p$, that is, $\Phi = \sum_{k = 1}^{p} \Phi_k$. Again, this 
		result is also implied by textbook results on 
		time series 
		\citep[p. 420]{brockwell2009time}.

	We note that $\Phi$ in the VAR(p)-case may have negative entries which is 
	different from the linear Hawkes case. This means that 
	some results that apply in the linear Hawkes setting do not hold in 
	VAR(p)-time series, e.g., in relation to marginalization
	\citep{mogensen2022equality, hyttinen2012learning}.

		\subsection{Normalized Parameters}
		\label{ssec:norm}
		
		The entries of the parameter matrix $\Phi$ have an intuitive 
		interpretation. However, in general we will not 
		be able to identify 
		these parameters with the methods in this paper, see Example 
		\ref{exmp:norm}. 
		Instead, we will aim to identify the entries of the \emph{normalized} 
		parameter matrix. These parameters also have a useful interpretation as 
		we 
		will explain. We use $I_n$ to denote the identity matrix of 
		dimension $n$ and $I_b$ to denote the identity matrix of dimension 
		$\vert B\vert$ for a finite set $B$.
		
		\begin{defn}[Normalized parameters]
			Consider a pair of matrices $(\Phi, \Theta)$ that solve the 
			integrated covariance equation. We say that they are 
			\emph{normalized} if $\Phi_{ii} = 0$ for all $i$.
			\label{def:norm}
		\end{defn}
		
		Say we consider any pair $(\Phi, \Theta)$ and wish to normalize it. We 
		define $D$ to be the 
		diagonal matrix such that $D_{ii} = (1 - \Phi_{ii})^{-1}$.
		Note that $\Phi_{ii} \neq 1$ due to the assumptions on $\Phi$ 
		(Subsection 
		\ref{ssec:ts} and Section \ref{sec:hawkes}). The matrix $D$ is 
		invertible and
		
		\begin{align}
			C  &= (I_n - \Phi)^{-1}\Theta(I_n - \Phi)^{-T} = (D(I_n - 
			\Phi))^{-1}D\Theta D(D(I_n - \Phi))^{-T} \\
			& = (I_n - \bar{\Phi})^{-1}\bar{\Theta}(I_n - \bar{\Phi})^{-T},
			\label{eq:normEq}
		\end{align}
		
		\noindent $\bar{\Phi} = I_n - D(I_n - \Phi)$. We see that 
		$(\bar{\Phi})_{ji} = \Phi_{ji}/(1 - \Phi_{jj})$ for $j\neq i$ 
		and that $\bar{\Phi}$ has zeros on the diagonal and 
		therefore $(\bar{\Phi}, \bar{\Theta})$ is normalized.
		
		In a VAR(1)-model, we see that $(\Phi_{jj})^k\Phi_{ji}$ is the partial 
		causal effect corresponding to the path $X_t^i \rightarrow X_{t+1}^j 
		\rightarrow X_{t+2}^j \rightarrow \ldots \rightarrow X_{t+k + 1}^j$. If 
		$\vert \Phi_{jj}\vert < 1$, we 
		have $\Phi_{ji}/(1 - \Phi_{jj}) = 
		\sum_{k=0}^{\infty}(\Phi_{jj})^k\Phi_{ji}$ and the normalized parameter 
		is therefore the sum of the partial effects \citep{tian2004identifying} 
		along all paths of the type 
		$X_t^i \rightarrow X_{t+1}^j 
		\rightarrow X_{t+2}^j \rightarrow \ldots \rightarrow X_{t+k + 1}^j$ and 
		a measure of the causal influence of the variable $X_t^i$ on the entire 
		future of process $j$, counting the direct effect as well as subsequent 
		self-effects.  The normalized parameters are seen to represent an 
		easily interpretable 
		causal quantity.
		
		We will also use quantities of the type $(I_b - 
		\Phi_{BB})^{-1}\Phi_{BA}$ which is a multivariate version of the above. 
		The interpretation generalizes in a straightforward manner to this 
		case. We see that $\Phi_{BB}^k \Phi_{BA}$ are the partial 
		effects \citep{tian2004identifying} from $X_{t}^A$ 
		to $X_{t+k+1}^B$ corresponding to paths $A 
		\rightarrow B \rightarrow B \rightarrow \ldots \rightarrow B$. If 
		$\rho(\Phi_{BB}) < 1$, this 
		means that $(I_b - \Phi_{BB})^{-1}\Phi_{BA} = \sum_{k = 0}^\infty 
		\Phi_{BB}^k \Phi_{BA}$ is an aggregate causal effect from $X_{t}^A$ to 
		$\{X_{t+j}^B \}_{j \geq 1}$ taking only paths of the type $A 
		\rightarrow B \rightarrow B \rightarrow \ldots \rightarrow B$ into 
		account. In this sense, it is a direct effect of $A$ at time $t$ on the 
		entire future $B$-process counting the direct effect $X_t^A \rightarrow 
		X_{t+1}^B$ and subsequent self-effects within $B$. Therefore, this is a 
		natural quantification of the effect of subprocess $A$ on subprocess 
		$B$ when taking 
		a 
		stochastic process point of view. The condition that $\rho(\Phi_{BB}) < 
		1$ is a natural requirement for the above interpretation of normalized 
		parameters as this means that 
		the marginal $B$-time series is 
		`stable' in itself. This ensures that the induced self-effects are 
		finite.
		
Example \ref{exmp:norm} in Appendix \ref{app:norm} shows that from a normalized 
pair, $(\Phi, \Theta)$, we can find different pairs $(\bar{\Phi}, 
\bar{\Theta})$ solving the same integrated 
covariance equation as the original pair. If $\rho(\Phi) < 1$ and the entries 
are nonnegative and we let $0 < D_{ii} < 1$, then the same holds for 
$\bar{\Phi}$. This means that in both the time series case and the linear 
Hawkes 
case we may find infinitely many pairs $(\bar{\Phi},\bar{\Theta})$ that solve 
the equation. In the time series 
case, we need to argue that $I - \bar{\Phi}_{BB}$ is also invertible. These 
arguments are provided in Example \ref{exmp:norm} in Appendix \ref{app:norm}. 
\cite{hyttinen2012learning} provide similar arguments in the context of cyclic 
linear structural equation models.

\subsection{Graphical Representation}

	One may use graphs to represent 
	assumptions that are sufficient for IV analysis. These graphs are defined 
	for VAR(p)-models 
	below.

\begin{defn}[Causal graph]
	Let $\mathcal{G}$ be a directed graph on nodes $V$ and with edge set $E$. 
	In the VAR(p)-model, we say that $\mathcal{G}$ is the 
	\emph{causal graph} 
	 of the process if $i \rightarrow j$ is in $E$, $i\neq j$, if and only if
	 there exists $k$ such that $(\Phi_k)_{ji} \neq 0$.
	 \label{def:causalGraph}
\end{defn}

Note that the causal graph does not contain loops, that is, edges $i 
\rightarrow i$. When identifying \emph{normalized} parameters, loops are 
inconsequential as the normalization removes self-effects and adjusts the other 
parameters to retain the integrated covariance.

 We say that $i$ is a \emph{parent of} $j$ if 
 $i\rightarrow j$ in $\mathcal{G}$, or if $i=j$, and we say 
 that a process, $j$, is 
 \emph{exogenous} if it has no parents other than $j$ in the causal 
graph. 	We say that a subset of processes, $I\subseteq V$, are 
\emph{exogenous} 
if there are no $i\notin I$ and $j\in I$ such that 
$i\rightarrow j$ in the causal graph. Note that there could be 
edges between processes in an exogeneous set, $I$, only not from 
processes $V\setminus I$ and into $I$.
		
		\section{Instrumental Processes}
		\label{sec:ip}
		
		We first describe the results for VAR(p)-process. The linear Hawkes 
		setting is analogous as we will see in Section \ref{sec:hawkes}, 
		though, 
		the interpretation of the parameters differs. Section 
		\ref{sec:confoundingts} provides a result for more general time series.
		
		This section uses the algebraic equation in (\ref{eq:intCovIntro}) to 
		define instrumental processes that allow us to identify 
		\emph{normalized} causal parameters (see Definition \ref{def:norm}). 
		\cite{mogensen2022equality} notes that the 
		parametrization of the integrated covariance is similar to the 
		parametrization of the covariance of a linear structural equation 
		model for which there are several identification results, see, e.g., 
		\cite{foygelHalftrek2012, chenNIPS2016, weihs2018}. 
		We will not use this 
		connection directly and therefore we refer to that paper for a detailed 
		explanation. One should note that identification results from 
		linear SEMs could be 
		used to obtain some of the results of this paper. However, we take a 
		more direct approach which is closer to other IV work. Furthermore, 
		this approach also makes the needed assumptions explicit whereas 
		identification results are often only generic, that is, hold 
		outside a 
		measure-zero set of parameters. 
		
		The results in this section are similar in spirit 
		to other IV work, however, we use the matrix $C$ directly, and not a 
		set of 
		random variables. $C$ is easily seen to be similar to a covariance 
		matrix, but it is an aggregate measure of covariance between processes 
		rather than the covariance of a set of observed random variables.

		We first give a univariate definition of an instrumental process which 
		leads to an identification result. We then define a multivariate 
		instrumental process and state the corresponding identification result. 
		The 
		univariate definition and result are naturally implied by the 
		multivariate result. However, we include them in this order to present 
		the 
		simplest possible setting first. The 
		symbol $\iota$ will throughout the paper denote an instrumental process 
		for the effect from $\alpha$ to $\beta$, where $\iota,\alpha,\beta\in 
		O$ 
		and 
		$O\subseteq V$ is a set of observed processes. The symbol $I$ 
		will denote an instrumental process which is instrumental for 
		the effect from the set $A$ to the set $B$, where $I,A,B \subseteq 
		O$. We assume that $I,A,$ 
		and $B$ are disjoint. We say that $\alpha$ is a 
		\emph{descendant of} $\beta$ in $\mathcal{G}$ if there exists a 
		directed 
		path $\beta\rightarrow \ldots \rightarrow \alpha$. We let
		$\mathrm{de}_\mathcal{G}(\beta)$ denote the set of descendants of 
		$\beta$. We let 
		$\mathrm{pa}_\mathcal{G}(\beta)$ denote the set of parents of $\beta$.
		By convention, $\beta\in\mathrm{de}_\mathcal{G}(\beta)$ and 
		$\beta\in\mathrm{pa}_\mathcal{G}(\beta)$. 
		We define $\mathrm{de}_\mathcal{G}(B) = \cup_{\beta\in 
			B}\mathrm{de}_\mathcal{G}(\beta)$ and $\mathrm{pa}_\mathcal{G}(B) = 
			\cup_{\beta\in 
			B}\mathrm{pa}_\mathcal{G}(\beta)$.
		
		\begin{defn}
			Let $\iota, \alpha,$ and $\beta$ be distinct. We say that $\iota$ 
			is a
			\emph{VAR(p)-instrumental process} for $\alpha\rightarrow\beta$ in 
			the 
			causal graph
			$\mathcal{G}$ if $\iota$ is exogenous, 
			$\mathrm{de}_\mathcal{G}(\iota)\cap \mathrm{pa}_\mathcal{G}(\beta) 
			\subseteq \{\alpha,\beta \}$, and 
			$C_{\alpha\iota} \neq 0$.
			\label{def:uniIV}
		\end{defn}
	
	We will later give a more general definition of an instrumental process, 
	and we say \emph{instrumental process} instead of \emph{VAR(p)-instrumental 
	process} when it is clear from the context that we are considering a 
	VAR(p)-model. We emphasize that $\iota \in V$ is a \emph{coordinate 
	process}. In the time 
	series case, it is the collection of random variables $(X_t^\iota)_{t\in 
	\mathbb{Z}}$. \cite{jiang2023instrumental} describe an instrumental 
	variable method 
	in point process models using a random variable as an 
	instrument. In contrast, we are using an entire coordinate process as 
	an instrument.
	
	One should also note that Definition \ref{def:uniIV} makes assumptions on 
	the causal graph from Definition \ref{def:causalGraph}. This graph is 
	constructed from the parameters of the time series and 
	therefore Definition \ref{def:uniIV} also imposes restrictions on the way 
	the coordinate processes interact.

		\begin{thm}
			Let $\mathcal{G}= (V,E)$ 
			be a causal graph, $V=O\disjU U$, and let $\iota, \alpha,\beta \in 
			O$. If $\iota$ is an instrumental 
			process for $\alpha\rightarrow\beta$, then 
			$(1 - 
			\Phi_{\beta\beta})^{-1}\Phi_{\beta\alpha}$ is 
			identified by $C_{\beta\iota}/C_{\alpha\iota}$.
			\label{thm:uniIV}
		\end{thm}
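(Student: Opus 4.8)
The plan is to reduce everything to the integrated covariance equation (\ref{eq:intCovIntro}) together with the combinatorial content of Definition \ref{def:uniIV}. Write $M = (I-\Phi)^{-1}$, so the equation reads $C = M\Theta M^T$ with $\Theta = E(\varepsilon_t\varepsilon_t^T)$ diagonal (as in Section \ref{sec:ivm}), and recall $\rho(\Phi)<1$, so that $M = \sum_{m\geq 0}\Phi^m$. Since $\iota$ is exogenous, the $\iota$-th row of $\Phi$ vanishes off the diagonal, hence so does the $\iota$-th row of $I-\Phi$, and reading off this row in $(I-\Phi)M = I$ gives $M_{\iota\cdot} = (1-\Phi_{\iota\iota})^{-1}e_\iota^T$, where $e_\iota$ is the $\iota$-th standard basis vector and $1-\Phi_{\iota\iota}\neq 0$ by the assumptions on $\Phi$ (Subsection \ref{ssec:ts}). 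Substituting into $C = M\Theta M^T$ and using that $\Theta$ is diagonal, the $\iota$-th column of $C$ is $C_{\cdot\iota} = M\Theta (M_{\iota\cdot})^T = (1-\Phi_{\iota\iota})^{-1}\Theta_{\iota\iota}\,M_{\cdot\iota}$; that is, the $\iota$-th column of $C$ is a scalar multiple of the $\iota$-th column of $M$. As $C_{\alpha\iota}\neq 0$ by hypothesis, this scalar is nonzero and $M_{\alpha\iota}\neq 0$, so $C_{\beta\iota}/C_{\alpha\iota} = M_{\beta\iota}/M_{\alpha\iota}$.

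It remains to identify this ratio. I would read off row $\beta$, column $\iota$ of $(I-\Phi)M = I$; since $\beta\neq\iota$ this gives $(1-\Phi_{\beta\beta})M_{\beta\iota} = \sum_{j\neq\beta}\Phi_{\beta j}M_{j\iota}$. The key point is that the only index $j\neq\beta$ contributing a nonzero summand is $j=\alpha$: if $\Phi_{\beta j}\neq 0$ with $j\neq\beta$ then $(\Phi_k)_{\beta j}\neq 0$ for some $k$, so $j\in\mathrm{pa}_\mathcal{G}(\beta)$; and if $M_{j\iota}\neq 0$ then, expanding $M=\sum_{m\geq 0}\Phi^m$, some weighted walk $\iota\to\cdots\to j$ has nonzero weight, whence $j\in\mathrm{de}_\mathcal{G}(\iota)$. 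By the instrumental assumption $\mathrm{de}_\mathcal{G}(\iota)\cap\mathrm{pa}_\mathcal{G}(\beta)\subseteq\{\alpha,\beta\}$, so $j=\alpha$ and the sum collapses to $\Phi_{\beta\alpha}M_{\alpha\iota}$. Dividing by $1-\Phi_{\beta\beta}\neq 0$ yields $M_{\beta\iota}/M_{\alpha\iota} = (1-\Phi_{\beta\beta})^{-1}\Phi_{\beta\alpha}$, which combined with the previous paragraph gives $C_{\beta\iota}/C_{\alpha\iota} = (1-\Phi_{\beta\beta})^{-1}\Phi_{\beta\alpha}$, as claimed.

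I expect the combinatorial step to be the only delicate part. Two points need care. First, the causal graph of Definition \ref{def:causalGraph} has no self-loops, so a nonzero walk arising from the Neumann expansion of $M$ must first be reduced to a directed path by deleting self-loop steps $j\to j$ (which carry a factor $\Phi_{jj}$) before one may conclude $j\in\mathrm{de}_\mathcal{G}(\iota)$. Second, the entries of $\Phi=\sum_k\Phi_k$ carry one-sided information only — a nonzero entry forces an edge, whereas cancellations across lags (or across distinct walks contributing to $\Phi^m$) may make an entry vanish even when edges are present — but only the forward implication is used, so this causes no trouble. Everything else is routine manipulation of $C = M\Theta M^T$ and $(I-\Phi)M = I$.
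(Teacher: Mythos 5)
Your proposal is correct and follows essentially the same route as the paper's proof: both reduce $C_{\beta\iota}/C_{\alpha\iota}$ to $R_{\beta\iota}/R_{\alpha\iota}$ via $C=(I-\Phi)^{-1}\Theta(I-\Phi)^{-T}$ and exogeneity of $\iota$, then read off the $(\beta,\iota)$ entry of $(I-\Phi)R=I$ and use $\mathrm{de}_\mathcal{G}(\iota)\cap\mathrm{pa}_\mathcal{G}(\beta)\subseteq\{\alpha,\beta\}$ to collapse the sum to the $\alpha$- and $\beta$-terms. Your extra care with self-loops in the Neumann-series walk argument is a correct elaboration of a step the paper leaves implicit.
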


		\begin{exmp}[Instrumental process]
			In this example, we show that the classical IV graph also allows 
			an IV analysis in this setting. Say we have a four-dimensional 
			VAR(1)-process such that the causal graph is as shown in 
			Figure \ref{fig:IVexample}\textbf{A} and process 4 is 
			unobserved. Process 1 is an 
			instrument for $2\rightarrow 3$. Theorem 
			\ref{thm:uniIV} 
			gives that 
			
			$$
			C_{3,1}/C_{2,1}
			$$
			
			\noindent identifies the normalized effect $(1 - 
			\Phi_{33})^{-1}\Phi_{32}$. Section \ref{app:var2exmp} provides an 
			example using a VAR(2)-model and 
			its unrolled graph.
			
						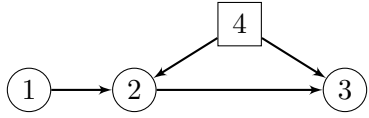
\begin{figure}
							\begin{minipage}{1\linewidth}
								\centering
								\begin{tikzpicture}[scale=0.7]
								\tikzset{vertex/.style = 
								{shape=circle,draw,minimum 
										size=1.5em, 
										inner 
										sep = 0pt}}
								\tikzset{edge/.style = {->,> = latex', thick}}
								\tikzset{edgebi/.style = {<->,> = latex', 
								thick}}
								\tikzset{every loop/.style={min distance=8mm, 
								looseness=5}}
								\tikzset{vertexFac/.style = 
								{shape=rectangle,draw,minimum 
										size=1.5em, 
										inner sep = 0pt}}
								
								\node[vertex] (a) at  (-4,0) {$1$};
								\node[vertex] (b) at  (-2,0) {$2$};
								\node[vertex] (c) at  (2,0) {$3$};
								\node[vertexFac] (d) at  (0,1.25) {$4$};

								
								\draw[edge] (a) to (b);
								\draw[edge] (b) to (c);
								\draw[edge] (d) to (c);
								\draw[edge] (d) to (b);
								
								\end{tikzpicture}
							\end{minipage}
							\caption{Instrumental process example. Each node 
							$\alpha$, 
							$\alpha \in \{1,2,3,4 \}$, represents an entire 
							\emph{coordinate process}, that is, the collection 
							of random variables 
							$(X_t^\alpha)_{t\in\mathbb{Z}}$ in the time series 
							case. Process 4 is 
							unobserved (indicated by the square). Process 1 
							($\iota$) may 
							serve as an instrumental process to estimate the 
							normalized effect from 2 ($\alpha$) to 3 ($\beta$).}
						\label{fig:IVexample}
						\end{figure}
						
		\end{exmp}

		\subsection{Multiple Instruments}
		
		As in other instrumental variable frameworks, we may consider using 
		\emph{multiple instruments} when there are multiple processes that 
		are instrumental for the same effects. Note that we throughout assume 
		$I_b - \Phi_{BB}$ to be invertible.

		\begin{defn}
			Let $I,A,B \subseteq O$ be disjoint and non-empty sets. We say that 
			a set of processes, $I$, is a 
			\emph{VAR(p)-instrumental process}
			for $A\rightarrow B$ in the causal graph $\mathcal{G}$ if $I$ is 
			exogenous, $\mathrm{de}_\mathcal{G}(I)\cap 
			\mathrm{pa}_\mathcal{G}(B) \subseteq A\cup B$, and 
			$C_{AI}$ has full row 
			rank.
			
			\label{def:multiIV}
		\end{defn}
		
		\begin{thm}[Multiple instruments (just identified)]
			Let $I,A,B\subseteq O$. If $I$ 
			is an instrumental process for the effect $A\rightarrow B$ and 
			$\vert A\vert = \vert I \vert$, then 
			$(I_b - \Phi_{BB})^{-1}\Phi_{BA}$ is identified by 
			$C_{BI}(C_{AI})^{-1}$.
			\label{thm:multiIV}
		\end{thm}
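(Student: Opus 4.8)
The plan is to reason directly from the integrated covariance equation, which by the results of Subsection~\ref{ssec:ts} reads $C = (I_n - \Phi)^{-1}\Theta(I_n - \Phi)^{-T}$ with $\Theta$ diagonal; write $M = (I_n - \Phi)^{-1}$, so that $C = M\Theta M^T$. First I would use exogeneity of $I$: since $\mathcal{G}$ has no edge into $I$ from $V\setminus I$, the block $\Phi_{I,V\setminus I}$ vanishes, so ordering the coordinates in $I$ first makes $I_n - \Phi$ block lower triangular; as $I_n-\Phi$ is invertible, its inverse $M$ is block lower triangular with the same pattern, whence $M_{I,V\setminus I} = 0$ and the diagonal block $M_{II} = (I_{|I|} - \Phi_{II})^{-1}$ is invertible. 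Because $\Theta$ is diagonal, a short block computation then gives, for every index set $X$,
\[
C_{XI} = M_{XI}\,\Theta_{II}\,(M_{II})^T,
\]
and in particular this holds for $X = A$ and $X = B$.

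Second, I would cash in the just-identified hypotheses. Since $|A| = |I|$ the matrix $C_{AI}$ is square, so ``full row rank'' is the same as invertibility; and as $C_{AI} = M_{AI}\,\Theta_{II}\,(M_{II})^T$ is a product of square matrices of equal size, each factor, in particular $M_{AI}$, is invertible. Hence
\[
C_{BI}(C_{AI})^{-1} = M_{BI}\,\Theta_{II}\,(M_{II})^T\big(M_{AI}\,\Theta_{II}\,(M_{II})^T\big)^{-1} = M_{BI}(M_{AI})^{-1},
\]
so it remains to prove the algebraic identity $(I_b - \Phi_{BB})\,M_{BI} = \Phi_{BA}\,M_{AI}$.

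Third, the core step: deriving that identity from the graphical conditions. Rewrite $(I_n-\Phi)M = I_n$ as $M = I_n + \Phi M$ and read off the $(B,I)$ block; since $B\cap I = \emptyset$ the identity matrix contributes nothing, so $M_{BI} = \sum_{j\in V}\Phi_{Bj}\,M_{jI}$. A summand is nonzero only if $\Phi_{Bj}\neq 0$ and $M_{jI}\neq 0$. The nonzero entries of $\Phi = \sum_k \Phi_k$ lie among the edges of $\mathcal{G}$ and the diagonal (Definition~\ref{def:causalGraph}), so $\Phi_{Bj}\neq 0$ forces $j\in\mathrm{pa}_\mathcal{G}(B)$; and expanding $M = \sum_{\ell\geq 0}\Phi^\ell$ (valid since $\rho(\Phi)<1$), a nonzero $M_{jI}$ requires a walk from $I$ to $j$ in the nonzero pattern of $\Phi$, which becomes a directed path in $\mathcal{G}$ after deleting self-loops, so $j\in\mathrm{de}_\mathcal{G}(I)$. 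The instrumental-process condition $\mathrm{de}_\mathcal{G}(I)\cap\mathrm{pa}_\mathcal{G}(B)\subseteq A\cup B$ then restricts the sum to $j\in A\cup B$, and extending it back over all of $A\cup B$ (the extra terms vanish) and splitting along the disjoint union yields $M_{BI} = \Phi_{BA}M_{AI} + \Phi_{BB}M_{BI}$. Rearranging and using the standing invertibility of $I_b - \Phi_{BB}$ gives $M_{BI} = (I_b - \Phi_{BB})^{-1}\Phi_{BA}M_{AI}$; right-multiplying by $(M_{AI})^{-1}$ finishes the proof, and the univariate Theorem~\ref{thm:uniIV} is the special case $|I|=|A|=|B|=1$.

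The only delicate point, I expect, is the support-tracking in the third step: because $\Phi = \sum_k\Phi_k$ may contain cancellations, only the one-sided implications (a nonzero entry of $\Phi$ forces the corresponding edge or self-loop to be present) are available, but these are exactly enough, since they confine the support of each term $\Phi_{Bj}M_{jI}$ to $j\in\mathrm{pa}_\mathcal{G}(B)\cap\mathrm{de}_\mathcal{G}(I)$, where the instrumental-process condition applies; one must also observe that self-loops, although they may appear in $\Phi$, create no new parents or descendants in the loop-free causal graph, so the reachability conclusions are genuinely statements about $\mathrm{pa}_\mathcal{G}$ and $\mathrm{de}_\mathcal{G}$ as defined.
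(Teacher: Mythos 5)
Your proposal is correct and follows essentially the same route as the paper's proof: exogeneity of $I$ gives $C_{XI}=R_{XI}\Theta_{II}R_{II}^{T}$, squareness plus full rank gives invertibility of $R_{AI}$, and the identity $R=I_n+\Phi R$ combined with $\mathrm{de}_\mathcal{G}(I)\cap\mathrm{pa}_\mathcal{G}(B)\subseteq A\cup B$ yields $R_{BI}=\Phi_{BA}R_{AI}+\Phi_{BB}R_{BI}$. The only difference is that you spell out the support-tracking and self-loop bookkeeping that the paper compresses into ``the last equality uses that $I$ is an instrumental process,'' which is a welcome elaboration rather than a deviation.
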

		
		If the condition $\mathrm{de}_\mathcal{G}(I)\cap 
		\mathrm{pa}_\mathcal{G}(B) \subseteq A\cup B$ is not satisfied, one may 
		in some cases choose a larger $B$ to find an instrumental process. 
		Figure \ref{fig:multiIVexample} gives an 
		example 
		of a graphical 
		structure with a multivariate instrumental process.

		\subsection{Overidentification}
		
		Consider instead the case where $\vert A\vert < \vert I\vert$, that is, 
		overidentification.  In this case, $C_{AI}$ is not invertible. Let 
		$C_{AI}^-$ be a right inverse, that is, 
		$C_{AI}^-$ is an $\vert I \vert \times \vert A \vert$ matrix such that 
		$C_{AI}C_{AI}^- = I_{a}$.
		Such a matrix exists as $C_{AI}$ has full row rank by assumption. Note 
		that from this assumption it also follows that $R_{AI}$ 
		has full row rank when $R=(I_n - \Phi)^{-1}$ as $\mathrm{rank}(AB) \leq 
		\mathrm{rank}(A)$ for 
		matrices $A$ and $B$. We see that $R_{AI}^- = 
		\Theta_{II}R_{II}C_{AI}^-$ is a right inverse of $R_{AI}$. We have
		
\begin{align*}
			R_{BI}R_{AI}^- = C_{BI}C_{AI}^-.
\end{align*}
		
		\noindent The proof of Theorem \ref{thm:multiIV} holds also 
		in this case, 
		showing that any choice of right inverse of $C_{AI}$ leads to 
		identification of the normalized parameters. Note that choosing a 
		specific right inverse 
		of $C_{AI}$ specifies a choice of right inverse of $R_{AI}$ as well
		-- this specific right inverse is then used throughout the proof.
		
		When $W$ is a positive definite weight matrix
		then $C_{AI}WC_{AI}^T$ is 
		invertible using the fact that $C_{AI}$ has full rank. We see that the 
		matrix $WC_{AI}^T (C_{AI}WC_{AI}^T)^{-1}$ is 
		a 
		right inverse of $C_{AI}$. This motivates using 
		
			$$
			C_{BI}WC_{AI}^T (C_{AI}WC_{AI}^T)^{-1}
			$$
			
	   \noindent as an estimate in the overidentified setting by plugging in 
	   estimated 
	   entries of $C$. See also \cite{thams2022} and \cite{hall2005generalized}.

								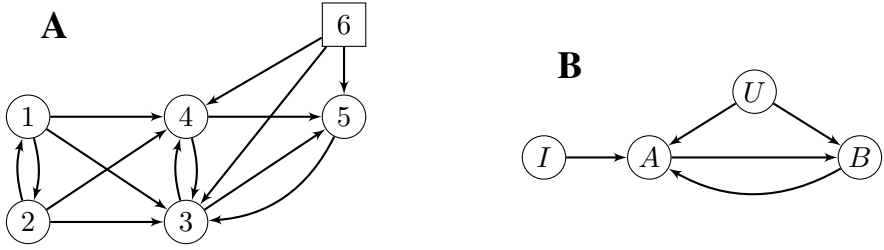
\begin{figure}
									\begin{minipage}{0.4\linewidth}
										\centering
										\begin{tikzpicture}[scale=0.7]
										\tikzset{vertex/.style = 
											{shape=circle,draw,minimum 
												size=1.5em, 
												inner 
												sep = 0pt}}
										\tikzset{edge/.style = {->,> = latex', 
										thick}}
										\tikzset{edgebi/.style = {<->,> = 
										latex', 
												thick}}
										\tikzset{every loop/.style={min 
										distance=8mm, 
												looseness=5}}
										\tikzset{vertexFac/.style = 
											{shape=rectangle,draw,minimum 
												size=1.5em, 
												inner sep = 0pt}}
										
										\node[vertex] (a) at  (-4,0) {$1$};
										\node[vertex] (b) at  (-4,-2) {$2$};
										\node[vertex] (c) at  (-1,-2) {$3$};
										\node[vertex] (d) at  (-1,0) {$4$};
										\node[vertex] (e) at  (2,0) {$5$};
										\node[vertexFac] (f) at  (2,1.75) {$6$};
										
										\node at (-3.5,1.75) {\Large 
										\textbf{A}};
										
										
										\draw[edge] (a) to [bend left = 15] (b);
										\draw[edge] (b) to [bend left = 15] (a);
										\draw[edge] (a) to (c);
										\draw[edge] (b) to (c);
										\draw[edge] (a) to (d);
										\draw[edge] (b) to (d);
										\draw[edge] (d) to [bend left = 15] (c);
										\draw[edge] (c) to [bend left = 15] (d);
										\draw[edge] (c) to (e);
										\draw[edge] (d) to (e);
										\draw[edge] (f) to (e);
										\draw[edge] (f) to (c);
										\draw[edge] (f) to (d);
										\draw[edge] (e) to [bend left] (c);
										
										\end{tikzpicture}
									\end{minipage}\hspace{.05\linewidth}%
									\begin{minipage}{0.4\linewidth}
										\centering
										\begin{tikzpicture}[scale=0.7]
										\tikzset{vertex/.style = 
											{shape=circle,draw,minimum 
												size=1.5em, 
												inner 
												sep = 0pt}}
										\tikzset{edge/.style = {->,> = latex', 
										thick}}
										\tikzset{edgebi/.style = {<->,> = 
										latex', 
												thick}}
										\tikzset{every loop/.style={min 
										distance=8mm, 
												looseness=5}}
										\tikzset{vertexFac/.style = 
											{shape=rectangle,draw,minimum 
												size=1.5em, 
												inner sep = 0pt}}
										
										\node[vertex] (a) at  (-4,0) {$I$};
										\node[vertex] (b) at  (-2,0) {$A$};
										\node[vertex] (c) at  (2,0) {$B$};
										\node[vertex] (d) at  
										(0,1.25) 
										{$U$};
										
										\node at (-3.5,1.75) {\Large 
										\textbf{B}};
										
										
										\draw[edge] (a) to (b);
										\draw[edge] (b) to (c);
										\draw[edge] (c) to [bend left] (b);
										\draw[edge] (d) to (c);
										\draw[edge] (d) to 
										(b);						
										\end{tikzpicture}
									\end{minipage}
									\caption{ \textbf{A}: 
										Multivariate instrumental process 
										example. Process 6 
										is 
										unobserved (indicated by the square). 
										Processes 1 and 2 
										($I$) may 
										serve as a multivariate instrumental 
										process to 
										estimate the 
										normalized effect from 3 and 4 ($A$) to 
										5 ($B$). 
										\textbf{B}: This graph is a simplified 
										version of \textbf{A}. We collapse 
										processes 1 
										and 2 into a single node and processes 
										3 and 4 into another node, defining 
										sets 
										$I = \{1,2\}, A=\{3,4\}, B= \{5\}, U 
										=\{6\}$ where $U$ is unobserved. For 
										$X,Y \in 
										\{I,A,B,U\}$, $X\neq Y$, we include 
										edges $X \rightarrow Y$ if and only if 
										$x \rightarrow y$ for some $x \in X$ 
										and $y\in Y$. This recovers the 
										`univariate' IV structure from Figure 
										\ref{fig:IVgraphs}\textbf{B}. 
										\cite{thams2022} use this graphical 
										representation as well as the full time 
										graphs as described below Figure 
										\ref{fig:IVgraphs}.}
									\label{fig:multiIVexample}
								\end{figure}
												
		
		\section{Time Series, General Case}
		\label{sec:confoundingts}
		
		We now argue that the above methods apply to time series models 
		under 
		far more general 
		assumptions than those of a VAR(p)-model. It is also in this case 
		possible to use a graph to 
		represent the assumptions we need for the instrumental variable method, 
		however, we dispense with the graphical conditions in this section. We 
		assume that $(X_t^I, X_t^A, X_t^B, X_t^U)^T$ is a stationary process 
		such that
		$X_t^I, X_t^A, X_t^B$ are mean-zero and
		
		\begin{align}
			X_t^B &= \sum_{j = 1}^{p}\Phi_{j,BA}X_{t-j}^A + \sum_{j = 
			1}^{p}\Phi_{j,BB}X_{t-j}^B + g_B(\ldots, 
			X_{t-2}^U, X_{t-1}^U, 
			\varepsilon_{t}^B) 
			\label{eq:genTimeSeries}
		\end{align}
		
		\noindent where $\Phi_{j,BA}$ and $\Phi_{j,BB}$ are matrices of the 
		appropriate dimensions and $g_B$ is a function. We define 
		$\Phi_{BA} = \sum_{j = 1}^{p}\Phi_{j,BA}$ and $\Phi_{BB} = \sum_{j = 
		1}^{p}\Phi_{j,BB}$. If Equation 
		(\ref{eq:genTimeSeries}) holds, $X_t^I$ is independent of $X^U$ and 
		$\varepsilon^B$ for all 
		$t$, $(I_b - \Phi_{BB})$ is invertible, $C_{BI}$ and $C_{AI}$ are 
		well-defined, and $C_{AI}$ has full rank, then we say that $I$ is an 
		\emph{instrumental process} for $A\rightarrow B$. 
		
		If $(X_t^I, X_t^A, X_t^B, X_t^U)^T$ is a VAR(p)-model (under the 
		stationarity condition of Subsection \ref{ssec:ts}) and Definition 
		\ref{def:multiIV} is satisfied, then $I$ is also an instrumental proces 
		for $A\rightarrow B$ using the above definition, and we see that the 
		above assumptions 
		are less restrictive than those used in the VAR(p)-setting. First, the 
		linearity is only imposed by Equation (\ref{eq:genTimeSeries}) while 
		$I$ and $U$ may depend nonlinearly on their own lagged values as no 
		explicit assumptions are made on their dynamics. Second, 
		the dependence of $A$ on $I$, $U$, and $B$ may also be nonlinear. Using 
		the above definition of an instrumental process, we obtain the next 
		theorem. $(C_{AI})^-$ denotes a right inverse of $C_{AI}$.
		
		\begin{thm}
			If $I$ is an 
			instrumental 
			process for $A\rightarrow B$, then $(I_b - 
			\Phi_{BB})^{-1}\Phi_{BA}$ is identified by $C_{BI}(C_{AI})^{-}$.
			\label{thm:genTimeSeries}
		\end{thm}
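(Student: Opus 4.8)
The plan is to derive a moment equation by multiplying the defining equation \eqref{eq:genTimeSeries} by $X_{t-k}^I$, taking expectations, and summing over $k$, so that the integrated covariances $C_{BI}$, $C_{AI}$, and $C_{BB}$-type terms appear. Concretely, fix $t$ and for each integer $k$ consider $E(X_t^B (X_{t+k}^I)^T)$. Using \eqref{eq:genTimeSeries} this equals $\sum_{j=1}^p \Phi_{j,BA} E(X_{t-j}^A (X_{t+k}^I)^T) + \sum_{j=1}^p \Phi_{j,BB} E(X_{t-j}^B (X_{t+k}^I)^T) + E\bigl(g_B(\ldots,X_{t-1}^U,\varepsilon_t^B)(X_{t+k}^I)^T\bigr)$. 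The last term vanishes for every $k$ because $X^I$ is independent of $X^U$ and of $\varepsilon^B$ (hence of $g_B$ evaluated on these) and all processes are mean-zero; this is the analogue of ``$I$ uncorrelated with the error'' in Example \ref{exmp:classicalIV}. Summing over $k \in \mathbb{Z}$ and using stationarity to reindex the inner sums (each shift by $j$ just relabels the summation index $k$, leaving $\sum_k E(X_{t-j}^A (X_{t+k}^I)^T) = C_{AI}$ and similarly $C_{BI}$, $C_{BB}$ for the cross term), the $j$-sums collapse via $\Phi_{BA}=\sum_j \Phi_{j,BA}$ and $\Phi_{BB}=\sum_j \Phi_{j,BB}$. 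This yields the matrix identity
\begin{align*}
	C_{BI} = \Phi_{BA} C_{AI} + \Phi_{BB} C_{BI}.
\end{align*}

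From here the argument is linear algebra. Rearranging gives $(I_b - \Phi_{BB}) C_{BI} = \Phi_{BA} C_{AI}$, and since $(I_b - \Phi_{BB})$ is invertible by hypothesis we obtain $C_{BI} = (I_b - \Phi_{BB})^{-1}\Phi_{BA}\, C_{AI}$. Right-multiplying by any right inverse $(C_{AI})^-$ of $C_{AI}$ (which exists because $C_{AI}$ has full row rank by the instrumental-process assumption) yields
\begin{align*}
	C_{BI}(C_{AI})^- = (I_b - \Phi_{BB})^{-1}\Phi_{BA},
\end{align*}
which is exactly the claimed identification: the right-hand side is a function of the model parameters alone, while the left-hand side is a function of the observable integrated covariance $C$ (its $BI$ and $AI$ blocks), so the normalized parameter matrix is identified. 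When $|A|=|I|$ this specializes to $C_{BI}(C_{AI})^{-1}$, recovering Theorem \ref{thm:multiIV}; the univariate Theorem \ref{thm:uniIV} is the scalar case.

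One subtlety to handle carefully is the interchange of the infinite sum over $k$ with the expectations and with the finite $j$-sums: this is justified by the absolute convergence of $\sum_k E(X_t^Y (X_{t+k}^Z)^T)$ for the relevant coordinates, which is precisely the well-definedness of $C$ assumed in the statement, together with stationarity (so that a shift of the time index by a fixed $j$ does not affect the summed quantity). I would also note that the reindexing step uses only stationarity of the joint process, not any VAR structure, which is the whole point of this more general theorem. The main obstacle is thus not any deep step but rather stating the mean-zero/independence bookkeeping cleanly enough that the vanishing of the $g_B$ term is transparent — in particular that independence of $X^I$ from both $X^U$ and $\varepsilon^B$ (not merely uncorrelatedness) is what makes $E(g_B(\cdot)(X_{t+k}^I)^T)=E(g_B(\cdot))\,E(X_{t+k}^I)^T = 0$ hold regardless of the form of $g_B$.
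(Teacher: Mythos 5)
Your proposal is correct and follows essentially the same route as the paper's proof: multiply Equation (\ref{eq:genTimeSeries}) by $(X_{t+k}^I)^T$, take expectations so the $g_B$ term vanishes by the independence and mean-zero assumptions, sum over $k$ using stationarity to collapse the lag sums into $C_{BI} = \Phi_{BA}C_{AI} + \Phi_{BB}C_{BI}$, and then invert $(I_b-\Phi_{BB})$ and apply a right inverse of $C_{AI}$. Your added remarks on the interchange of sums and on why independence (rather than mere uncorrelatedness) is needed for the $g_B$ term are sound and slightly more explicit than the paper's own exposition.
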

		
		\begin{proof}
			 We prove the case $p = 1$ while the general case is in Section 
			 \ref{app:proofs}. We may write
			 
			 \begin{align*}
			 	E\Big(X_{t}^B(X_{t+k}^I)^T\Big) &= E\Big((\Phi_{BA}X_{t-1}^A
			 	+ 
			 	\Phi_{BB}X_{t-1}^B + g_B(\ldots, X_{t-2}^U, X_{t-1}^U, 
			 	\varepsilon_t^B))(X_{t+k}^I)^T\Big) \\
			 	&= \Phi_{BA}E\Big(X_{t-1}^A(X_{t+k}^I)^T \Big) + 
			 	\Phi_{BB}E\Big(X_{t-1}^B(X_{t+k}^I)^T \Big).
			 \end{align*}
			 
			 \noindent We sum over $k$ in the above expression,

			 \begin{align*}
			 	\sum_{k = -\infty}^\infty E\Big(X_{t}^B(X_{t+k}^I)^T\Big)
			 	&= \sum_{k = 
			 	-\infty}^\infty\Phi_{BA}E\Big(X_{t-1}^A(X_{t+k}^I)^T 
			 	\Big) + \sum_{k = -\infty}^\infty
			 	\Phi_{BB}E\Big(X_{t-1}^B(X_{t+k}^I)^T \Big) \\
			 	C_{BI} &= \Phi_{BA} C_{AI} + \Phi_{BB}C_{BI}
			 \end{align*}
			 
			 \noindent and isolating $C_{BI}$ we obtain $
			 C_{BI} = (I_b - \Phi_{BB})^{-1}\Phi_{BA}C_{AI}$. If $C_{AI}$ is 
			 has 
			 full row rank this gives identification of 
			 the 
			 matrix $(I_b - 
			 \Phi_{BB})^{-1}\Phi_{BA}$.
			 
		\end{proof}
		
		One 
		can use conditions such as 
		those in the VAR(p)-models to ensure that the relevant entries of $C$ 
		are well-defined.

		\section{Estimation}
		\label{sec:est}
		
		In order to use the instrumental process framework above, one can 
		estimate the relevant entries of the integrated covariance matrix and 
		then plug in the estimated covariances to obtain estimates of the 
		normalized parameters directly from the identifying formulas. To 
		estimate the matrix $C$ in the time series 
		case, one may 
		use the relation to the spectral density matrix of the time series, 
		see, 
		e.g., \cite{hansen1982large, brillinger2001time}. One may also use the 
		connection to 
		\emph{long-run covariance} to estimate $C$, see, 
		e.g., \cite{andrews1991heteroskedasticity, andrews1992improved}.

		Section \ref{app:numerical} contains numerical examples of the 
		instrumental process method in this paper. Subsection 
		\ref{ssec:hawkesNorm} describes estimation in the linear Hawkes case.
		

\section{Linear Hawkes Processes}
\label{sec:hawkes}

		A \emph{linear Hawkes 
			process} is a certain kind of \emph{point process}. We give a short 
		introduction here, see also, e.g., \cite{laub2015hawkes, 
			daley2003introduction}. We consider a 
		filtered probability space $(\Omega, \mathcal{F}, (\mathcal{F}_t), P)$ 
		where $(\mathcal{F}_t)$ is a filtration and an index set $V = 
		\{1,2,\ldots, n\}$. For $i\in V$, there is a sequence of random event 
		times 
		$\{T_k^i \}_{k\in \mathbb{Z}}$ such that $T_k^i < T_{k+1}^i$ almost 
		surely. We define a counting process $N_t^i$ such that $N_t^i 
		- N_s^i = \sum_{k} \mathds{1}_{s < T_k^i \leq t}$. 
		Furthermore, we assume that two events cannot occur simultaneously in 
		the multivariate point process. A linear Hawkes process can be defined 
		by imposing constraints on the \emph{conditional intensities}, 
		$\lambda_t^i$. These are stochastic processes and satisfy
		
		\begin{align*}
			\lambda_t^i = \lim_{h\downarrow 0} \frac{1}{h} P(N_{t+h}^i - N_t^i 
			= 1 \mid 
			\mathcal{F}_t)
		\end{align*}
		
		\noindent where $\mathcal{F}_t$ represents the history of the process 
		until time point $t$. A multivariate \emph{linear Hawkes process} is a 
		point process such that 
		
		\begin{align*}
			\lambda_t^j = \mu_j + \sum_{i = 1}^{n} \int_{-\infty}^{t} 
			\phi_{ji}(t - s) dN_s^i
		\end{align*}
		
		\noindent for a nonnegative constant $\mu_j$ and nonnegative functions 
		$\phi_{ji}$ which are zero outside $(0,\infty)$. We assume $\mu_j > 0$. 
		We define $\Phi$ to be 
		the $n \times n$ matrix such that 
		$\Phi_{ji} = \int_{-\infty}^{\infty} \phi_{ji}(s) ds$. See Figure 
		\ref{fig:hawkesIllu} for an illustration of data from a linear 
		Hawkes process. When $A$ is a square matrix, we let $\rho(A)$ denote 
		its \emph{spectral 
			radius}, that is, the largest absolute value of its eigenvalues. We 
		assume that $\rho(\Phi) < 1$ in which case we can assume the linear 
		Hawkes process to have stationary increments \citep{jovanovic2015}. We 
		define the integrated covariance in this setting,
		
		\begin{figure}
			\begin{minipage}{0.48\linewidth}
				\includegraphics[width = \textwidth]{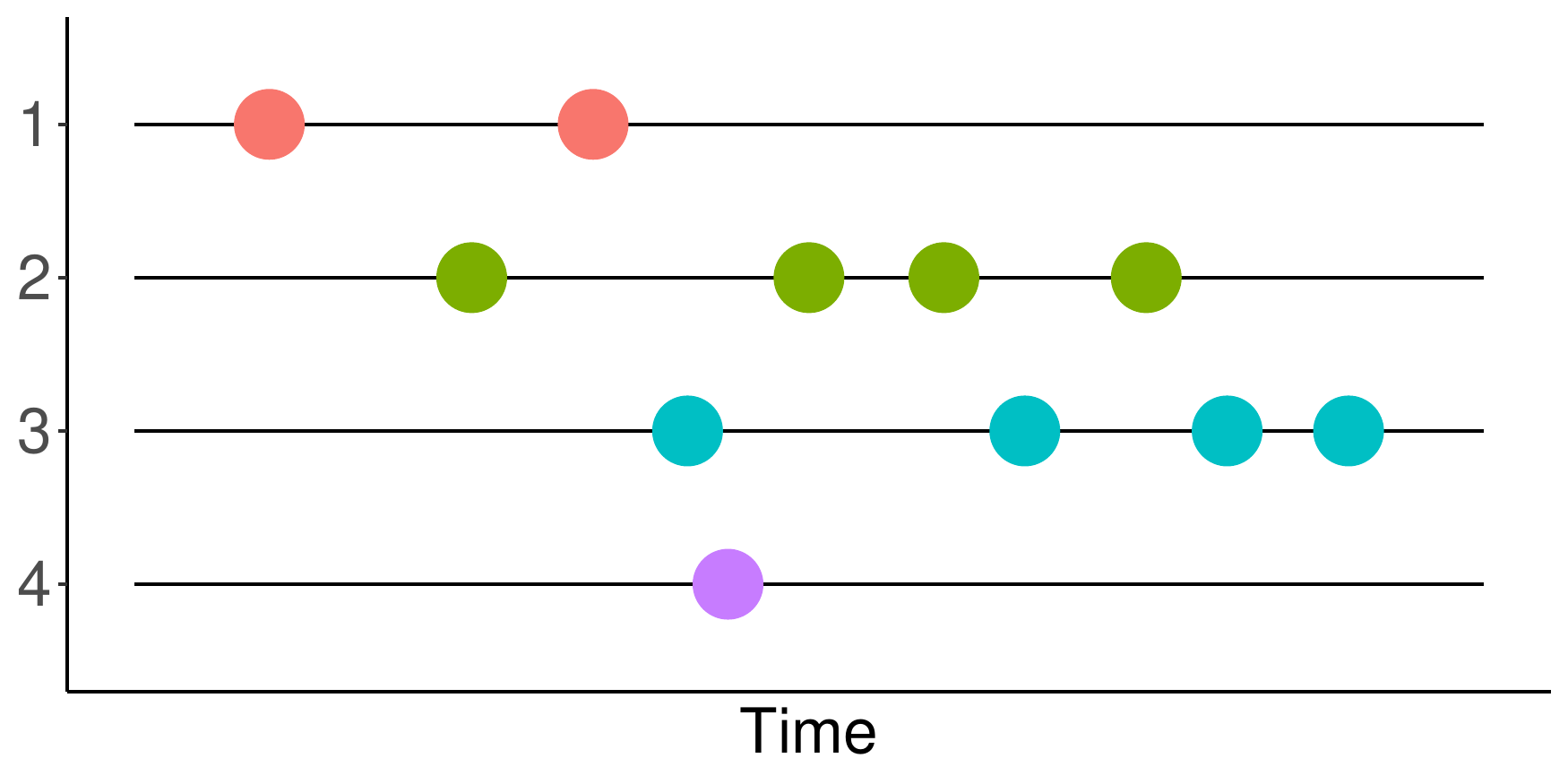}
			\end{minipage}\hspace{.05\linewidth}%
			\begin{minipage}{0.48\linewidth}
				\includegraphics[width = \textwidth]{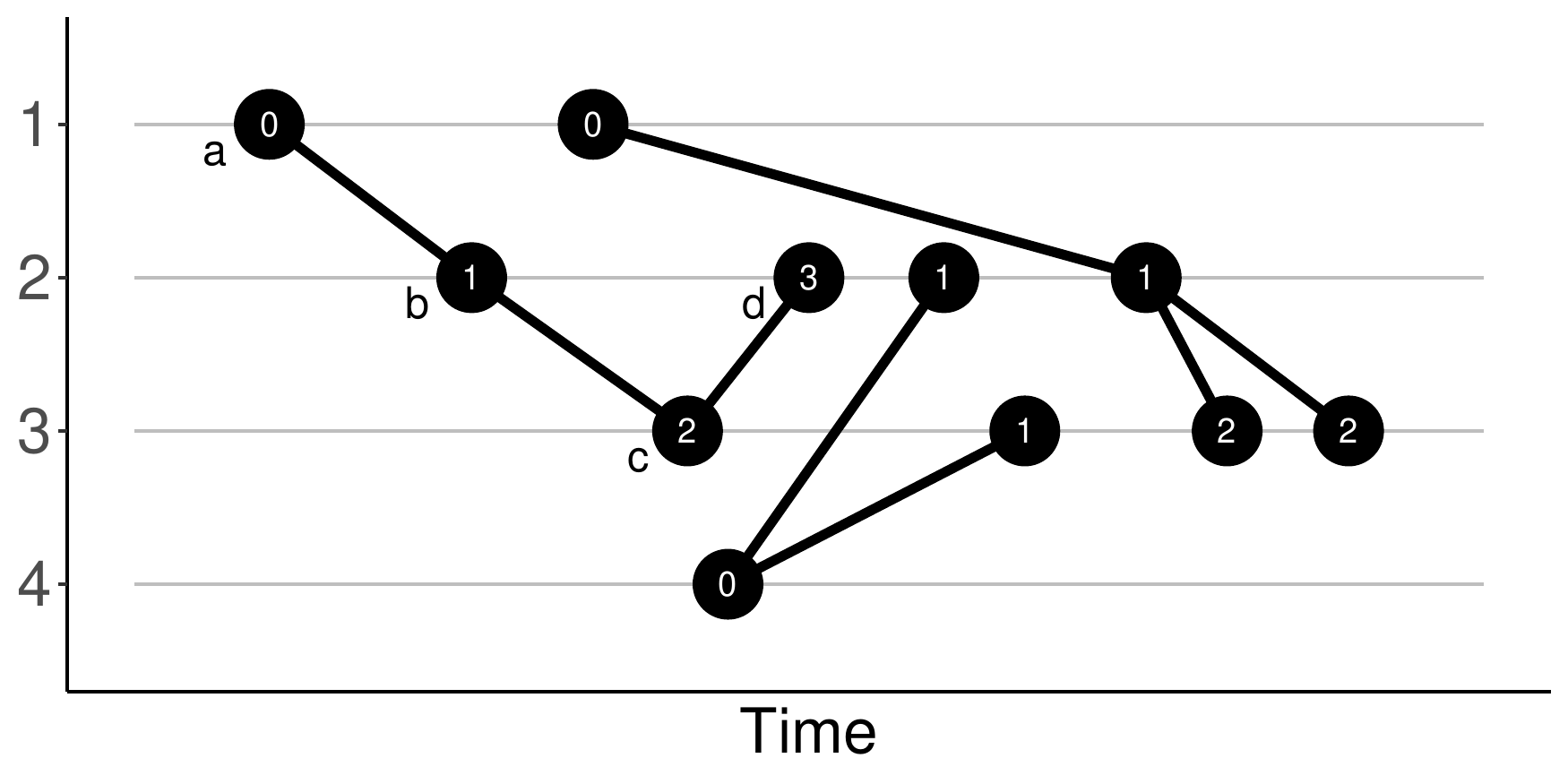}
			\end{minipage}
			
			\caption{Example data from a 
				four-dimensional linear Hawkes process. 
				Left: Example observed data. Color and 
				vertical placement indicate coordinate 
				process (1, 2, 3, or 4) of the event. 
				Horizontal placement indicates time of the 
				event. Right: The linear Hawkes process can 
				be generated as a \emph{cluster process} 
				where each event may spark future child 
				events, indicated here with line segments. 
				These parent-child relations are 
				unobserved. In the cluster with labeled 
				events ($a, b, c, d$), event 
				$b$ is in the first generation after event 
				$a$ while event $d$ is in the third 
				generation after event $a$. We say that
				$b$ is a \emph{child} of $a$ 
				(direct descendant).}
			\label{fig:hawkesIllu}
		\end{figure}

		\begin{align}
			C_{ij} dt = \int_{-\infty}^{\infty} E(dN_{t}^i dN_{t+s}^j) - 
			E(dN_t^i)E(dN_{t+s}^j) d s.
			\label{eq:hawkesC}
		\end{align}
		
		\noindent We define $\Lambda_i dt = E(d N_t^i)$ and let $\Theta$ 
		denote the diagonal matrix such that $\Theta_{ii} = \Lambda_i$. It 
		holds 
		that 
		
		\begin{align}
			C = (I_n - \Phi)^{-1}\Theta (I_n - \Phi)^{-T},
			\label{eq:Chawkes}
		\end{align}
		
		\noindent see  \cite{achab2017}. This is 
		the same equation 
		as in the VAR(1)-case in Section 
		\ref{sec:ivm}, even 
		though interpretations of the parameter matrices $\Phi$ and $\Theta$ 
		differ. The following definition is analogous to Definition 
		\ref{def:causalGraph}.	

\begin{defn}[Causal graph]
	Let $\mathcal{G}$ be a directed graph on nodes $V$ and with edge set $E$. 
	In the linear Hawkes case, we say that $\mathcal{G}$ is the \emph{causal 
		graph} of the process if $i \rightarrow j$ is in $E$, $i\neq j$, if and 
	only if 
	$\Phi_{ji} \neq 0$.
	\label{def:hawkesCausalGraph}
\end{defn}
		
		\paragraph{Cluster Interpretation}
		
		Above we introduced the linear Hawkes process as a point process with 
		conditional intensities of a certain type. It is, however, possible to 
		give 
		an equivalent definition using the so-called \emph{cluster 
		representation} 
		\citep{jovanovic2015}. 
		We will give a very short description here. For each $i \in V$, a set 
		of 
		generation-0 events are generated from a homogeneous Poisson process 
		with 
		rate $\mu_i$. Each of these events create a \emph{Hawkes cluster} which 
		is 
		generated in the following way. From a generation-$k$ event of 
		type $i$ at time 
		$s$ (coordinate process $i$), generation-($k+1$) events of type 
		$j$ 
		are generated from an inhomogenous Poisson process started at $s$ with 
		rate 
		$\phi_{ji}(t-s)$, $t > s$. This construction is repeated. The 
		superposition 
		of all clusters form a linear Hawkes process. Note that only event 
		types 
		and 
		time points are observed while generation and parent-child relations of 
		an 
		event are unknown when observing data from a linear Hawkes process.
		
		The cluster interpretation provides a straightforward 
		interpretation 
		of the entries of $\Phi$. The entry $\Phi_{ji}$ is the expected number 
		of 
		direct $j$-children from an $i$-event. In general, $(\Phi^k)_{ji}$ is 
		the 
		expected number of $j$-events from an $i$-event in the $k$'th 
		generation 
		from 
		the $i$-event. We define $R = (I_n - \Phi)^{-1} = \sum_{k = 0}^{\infty} 
		\Phi^k$. $R_{ji}$ is the expected total number of $j$-descendants on a 
		cluster 
		rooted 
		at an $i$-event.  The infinite 
		sum 
		converges and $R$ is well-defined due to the assumption on the spectral 
		radius of $\Phi$ 
		\citep{jovanovic2015}.
		See Figure \ref{fig:hawkesIllu} for an
		example of (in)direct descendant events.

\subsection{Normalized Parameters}
\label{ssec:hawkesNorm}

We can normalize the parameters of Equation (\ref{eq:Chawkes}) just as we did 
in Equation (\ref{eq:normEq}). In the 
linear Hawkes process, the normalized parameter $\bar{\Phi}_{ji}$ is the 
expected 
number of $j$-events on a cluster rooted at an $i$-event counting only 
subtrees of 
the form $i - j - j - \ldots - j$ for any number of $j$-events. 
This is thus the expected number of direct $j$-events from an 
$i$-event when also counting subsequent `self-events' $j - j$. It is clear from 
Equation (\ref{eq:Chawkes}) that the results in Section \ref{sec:ip} also hold 
for linear Hawkes processes. If $\rho(\Phi) < 
1$ and the entries of $\Phi$ are nonnegative, then this will also be 
the case for $\bar{\Phi}$ in Equation (\ref{eq:normEq}) 
\citep{mogensen2022equality}. This means that 
the normalized parameters are also within the Hawkes parameter space.

\subsection{Estimation}
\label{ssec:hawkesEst}

		\cite{achab2017} describe how to estimate 
		cumulants of linear Hawkes process. We sketch their approach below. 
		We assume that we observe a linear Hawkes process over the interval 
		$[0, T]$ and that there exists $H > 0$ such that restricting the 
		integration in Equation (\ref{eq:hawkesC}) to $[-H, H]$ introduces only 
		a
		negligible error. As pointed out by \cite{achab2017}, this is 
		reasonable if the support of $\phi_{ji}$ is small compared to $H$ and 
		the spectral radius of $\Phi$ is sufficiently small. Given a 
		realization of a 
		stationary linear Hawkes process on $[0, T]$ let $p_i = \{t_1^i,\ldots, 
		t_{m_i}^i\} \subset [0,T]$ be the observed event times of process $i$. 
		The following are estimators of the first- and second-order cumulants,
		
		\begin{align*}
			\hat{\Lambda}_i = \frac{1}{T} \sum_{k = 1}^{m_i} 1, \ \ \ \ \ \ \ 
			\ \ \ \ \ \ \
			\hat{C}_{ij} = \frac{1}{T} \sum_{k = 1}^{m_i} \left(N_{t_{k}^i + 
				H}^j - N_{t_{k}^i - H}^j - 2H\hat{\Lambda}^j \right)
		\end{align*}
		
		In the above, $N_t^i$ refers to the observed counting process 
		corresponding to process $i$, that is, $N_t^i = 0$ for $t < 0$ and in 
		general $N_t^i = \sum_{k = 1}^{m_i} \mathds{1}_{t_k^i \leq t}$. As 
		noted by \cite{achab2017}, there is a bias in the estimation of the 
		integrated covariance, however, it is found to be negligible. 
		\cite{achab2017} (Theorem 2.1 and Remark 1) show asymptotic consistency 
		for $H_T \rightarrow \infty$ and $H_T^2/T \rightarrow 0$ 
		where $H_T$ is the value of the parameter $H$ used when observing the 
		process on the interval $[0,T]$.


		\section{Conclusion}
		\label{sec:conclusion}

		 The instrumental variable method in this paper provides a moment 
		 equation for time series models which avoids using a conditional 
		 moment 
		 equation as in 
		 \cite{thams2022}. It also makes minimal assumptions on the marginal 
		 distribution of the instrumental process. On the other hand, it 
		 involves an integral or an
		 infinite sum which needs to be estimated when applying the method. One 
		 should also note that our estimands are slightly different than those 
		 of \cite{thams2022}. As shown, the estimands in this paper do have a 
		 simple causal interpretation, however.
		
		The integrated covariance approach also allows a unified treatment of 
		IV methods in time series (discrete-time) and continuous-time processes 
		as illustrated by the application to the continuous-time linear Hawkes 
		processes. It is clearly of 
		interest to extend this framework to more general classes of 
		continuous-time processes. Finally, one should also note that the 
		parametrization of the integrated 
		covariance can be used to 
		obtain other identification results than the instrumental variable 
		results in this paper.
		
		
		\acks{This work was supported by a DFF-International Postdoctoral Grant 
		(0164-00023B) from Independent Research Fund Denmark. The author is 
		a member of the ELLIIT Strategic Research Area at Lund University. The 
		author 
		thanks 
		Nikolaj Thams and Jonas Peters for helpful discussions. The author is 
		also grateful to the reviewers for their constructive comments and 
		suggestions.}
		
		
		\bibliography{C:/Users/swmo/Desktop/-/forsk/references}

\begin{thebibliography}{34}
\providecommand{\natexlab}[1]{#1}
\providecommand{\url}[1]{\texttt{#1}}
\expandafter\ifx\csname urlstyle\endcsname\relax
  \providecommand{\doi}[1]{doi: #1}\else
  \providecommand{\doi}{doi: \begingroup \urlstyle{rm}\Url}\fi

\bibitem[Achab et~al.(2017)Achab, Bacry, Ga{\"i}ffas, Mastromatteo, and
  Muzy]{achab2017}
Massil Achab, Emmanuel Bacry, St{\'e}phane Ga{\"i}ffas, Iacopo Mastromatteo,
  and Jean-Fran{\c c}ois Muzy.
\newblock Uncovering causality from multivariate {H}awkes integrated cumulants.
\newblock In \emph{Proceedings of the 34th International Conference on Machine
  Learning (ICML)}, 2017.

\bibitem[Andrews(1991)]{andrews1991heteroskedasticity}
Donald~WK Andrews.
\newblock Heteroskedasticity and autocorrelation consistent covariance matrix
  estimation.
\newblock \emph{Econometrica}, 59\penalty0 (3):\penalty0 817--858, 1991.

\bibitem[Andrews and Monahan(1992)]{andrews1992improved}
Donald~WK Andrews and J~Christopher Monahan.
\newblock An improved heteroskedasticity and autocorrelation consistent
  covariance matrix estimator.
\newblock \emph{Econometrica}, 60\penalty0 (4):\penalty0 953--966, 1992.

\bibitem[Brillinger(1981)]{brillinger2001time}
David~R Brillinger.
\newblock \emph{Time Series: {D}ata Analysis and Theory}.
\newblock Holden Day, Inc., San Francisco, 1981.

\bibitem[Brillinger(1994)]{brillinger1994time}
David~R Brillinger.
\newblock Time series, point processes, and hybrids.
\newblock \emph{Canadian Journal of Statistics}, 22\penalty0 (2):\penalty0
  177--206, 1994.

\bibitem[Brito and Pearl(2002)]{brito2002generalized}
Carlos Brito and Judea Pearl.
\newblock Generalized instrumental variables.
\newblock In \emph{Proceedings of the Eighteenth conference on Uncertainty in
  Artificial Intelligence (UAI)}, 2002.

\bibitem[Brockwell and Davis(1991)]{brockwell2009time}
Peter~J Brockwell and Richard~A Davis.
\newblock \emph{Time Series: {T}heory and {M}ethods}.
\newblock Springer, New York, 2nd edition, 1991.

\bibitem[Chen(2016)]{chenNIPS2016}
Bryant Chen.
\newblock Identification and overidentification of linear structural equation
  models.
\newblock In \emph{Advances in Neural Information Processing Systems},
  volume~29, 2016.

\bibitem[Daley and Vere-Jones(2003)]{daley2003introduction}
Daryl~J Daley and David Vere-Jones.
\newblock \emph{An Introduction to the Theory of Point Processes}.
\newblock Springer, New York, 2nd edition, 2003.

\bibitem[Danks and Plis(2013)]{danks2013}
David Danks and Sergey Plis.
\newblock Learning causal structure from undersampled time series.
\newblock In \emph{NIPS 2013 Workshop on Causality}, 2013.

\bibitem[Foygel et~al.(2012)Foygel, Draisma, and Drton]{foygelHalftrek2012}
Rina Foygel, Jan Draisma, and Mathias Drton.
\newblock Half-trek criterion for generic identifiability of linear structural
  equation models.
\newblock \emph{The Annals of Statistics}, 40\penalty0 (3):\penalty0
  1682--1713, 2012.

\bibitem[Hall(2005)]{hall2005generalized}
Alastair~R Hall.
\newblock \emph{Generalized method of moments}.
\newblock Oxford University Press, 2005.

\bibitem[Hansen(1982)]{hansen1982large}
Lars~Peter Hansen.
\newblock Large sample properties of generalized method of moments estimators.
\newblock \emph{Econometrica}, 50\penalty0 (4):\penalty0 1029--1054, 1982.

\bibitem[Horn and Johnson(1985)]{horn1985}
Roger~A Horn and Charles~R Johnson.
\newblock \emph{Matrix Analysis}.
\newblock Cambridge University Press, 1985.

\bibitem[Hyttinen et~al.(2012)Hyttinen, Eberhardt, and
  Hoyer]{hyttinen2012learning}
Antti Hyttinen, Frederick Eberhardt, and Patrik~O Hoyer.
\newblock Learning linear cyclic causal models with latent variables.
\newblock \emph{The Journal of Machine Learning Research}, 13\penalty0
  (1):\penalty0 3387--3439, 2012.

\bibitem[Jiang et~al.(2023)Jiang, Chen, and Ding]{jiang2023instrumental}
Zhichao Jiang, Shizhe Chen, and Peng Ding.
\newblock An instrumental variable method for point processes: generalized
  {W}ald estimation based on deconvolution.
\newblock \emph{Biometrika}, 2023.
\newblock To appear.

\bibitem[Joseph et~al.(1961)Joseph, Lewis, and Tou]{joseph1961plant}
P~Joseph, J~Lewis, and J~Tou.
\newblock Plant identification in the presence of disturbances and application
  to digital adaptive systems.
\newblock \emph{Transactions of the American Institute of Electrical Engineers,
  Part II: Applications and Industry}, 80\penalty0 (1):\penalty0 18--24, 1961.

\bibitem[Jovanovi{\'c} et~al.(2015)Jovanovi{\'c}, Hertz, and
  Rotter]{jovanovic2015}
Stojan Jovanovi{\'c}, John Hertz, and Stefan Rotter.
\newblock Cumulants of {H}awkes point processes.
\newblock \emph{Physical Review E}, 91\penalty0 (4):\penalty0 042802, 2015.

\bibitem[Laub et~al.(2015)Laub, Taimre, and Pollett]{laub2015hawkes}
Patrick~J Laub, Thomas Taimre, and Philip~K Pollett.
\newblock Hawkes processes.
\newblock \emph{arXiv:1507.02822}, 2015.

\bibitem[L{\"u}tkepohl(2005)]{lutkepohl2005new}
Helmut L{\"u}tkepohl.
\newblock \emph{New Introduction to Multiple Time Series Analysis}.
\newblock Springer, Berlin, 2005.

\bibitem[Mogensen(2022)]{mogensen2022equality}
S{\o}ren~Wengel Mogensen.
\newblock Equality constraints in linear {H}awkes processes.
\newblock In \emph{Proceedings of the 1st Conference on Causal Learning and
  Reasoning (CLeaR)}, 2022.

\bibitem[Peters et~al.(2013)Peters, Janzing, and
  Sch\"{o}lkopf]{peters2013causal}
Jonas Peters, Dominik Janzing, and Bernhard Sch\"{o}lkopf.
\newblock Causal inference on time series using restricted structural equation
  models.
\newblock In \emph{Advances in Neural Information Processing Systems},
  volume~26, 2013.

\bibitem[{R Core Team}(2021)]{R}
{R Core Team}.
\newblock \emph{R: A Language and Environment for Statistical Computing}.
\newblock R Foundation for Statistical Computing, Vienna, Austria, 2021.
\newblock URL \url{https://www.R-project.org/}.

\bibitem[Reiers{\o}l(1941)]{reiersol1941confluence}
Olav Reiers{\o}l.
\newblock Confluence analysis by means of lag moments and other methods of
  confluence analysis.
\newblock \emph{Econometrica}, 9\penalty0 (1):\penalty0 1--24, 1941.

\bibitem[Reiers{\o}l(1945)]{reiersol1945confluence}
Olav Reiers{\o}l.
\newblock \emph{Confluence analysis by means of instrumental sets of
  variables}.
\newblock PhD thesis, Stockholms H\"{o}gskola, 1945.

\bibitem[Sargan(1958)]{sargan1958estimation}
John~D Sargan.
\newblock The estimation of economic relationships using instrumental
  variables.
\newblock \emph{Econometrica}, 26\penalty0 (3):\penalty0 393--415, 1958.

\bibitem[Thams et~al.(2022)Thams, S{\o}ndergaard, Weichwald, and
  Peters]{thams2022}
Nikolaj Thams, Rikke S{\o}ndergaard, Sebastian Weichwald, and Jonas Peters.
\newblock Identifying causal effects using instrumental time series: {N}uisance
  {IV} and correcting for the past.
\newblock \emph{arXiv:2203.06056}, 2022.

\bibitem[Tian(2004)]{tian2004identifying}
Jin Tian.
\newblock Identifying linear causal effects.
\newblock In \emph{Proceedings of the 19th AAAI Conference on Artificial
  Intelligence}, 2004.

\bibitem[Weihs et~al.(2018)Weihs, Robinson, Dufresne, Kenkel, Kubjas, II,
  Nguyen, Robeva, and Drton]{weihs2018}
Luca Weihs, Bill Robinson, Emilie Dufresne, Jennifer Kenkel, Kaie Kubjas,
  Reginald~McGee II, Nhan Nguyen, Elina Robeva, and Mathias Drton.
\newblock Determinantal generalizations of instrumental variables.
\newblock \emph{Journal of Causal Inference}, 6\penalty0 (1), 2018.

\bibitem[Wong(1966)]{wong1966estimation}
K~Y Wong.
\newblock \emph{Estimation of parameters of linear systems using the
  instrumental variable method}.
\newblock PhD thesis, University of California, Berkeley, 1966.

\bibitem[Wong and Polak(1967)]{wong1967identification}
Kwan Wong and Elijah Polak.
\newblock Identification of linear discrete time systems using the instrumental
  variable method.
\newblock \emph{IEEE Transactions on Automatic Control}, 12\penalty0
  (6):\penalty0 707--718, 1967.

\bibitem[Wright(1928)]{wright1928tariff}
Philip~G Wright.
\newblock \emph{Tariff on animal and vegetable oils}.
\newblock Macmillan Company, New York, 1928.

\bibitem[Zaatour(2014)]{hawkesRpackage}
Riadh Zaatour.
\newblock \emph{hawkes: Hawkes process simulation and calibration toolkit},
  2014.
\newblock URL \url{https://CRAN.R-project.org/package=hawkes}.
\newblock R package version 0.0-4.

\bibitem[Zeileis et~al.(2020)Zeileis, K\"oll, and Graham]{sandwichRpackage}
Achim Zeileis, Susanne K\"oll, and Nathaniel Graham.
\newblock Various versatile variances: An object-oriented implementation of
  clustered covariances in {R}.
\newblock \emph{Journal of Statistical Software}, 95\penalty0 (1):\penalty0
  1--36, 2020.
\newblock \doi{10.18637/jss.v095.i01}.

\end{thebibliography}
		

			\appendix
			
		\section{Integrated Covariance, VAR(p)}
		\label{app:intCovVARp}
		
First, we rewrite the VAR(p)-process as a 
VAR(1)-process, $Y$, with $n p$ 
coordinate processes,

\begin{align*}
	Y_t &= \Phi_Y Y_{t-1} + \varepsilon_t^Y	=			 \begin{bmatrix}
		\Phi_{1}       & \Phi_{2} & \ldots & \Phi_{p-1} & \Phi_{p} \\
		I       & 0 & \ldots & 0 & 0
		\\
		0 & I & \ldots & 0 & 0 \\
		\vdots & \vdots & \ddots & \vdots & \vdots \\
		0      & 0 & \ldots & I & 0
		\end{bmatrix} Y_{t-1} + 
		\begin{bmatrix}
			\varepsilon_t     \\  
			0 \\
			0\\
			\vdots \\
			0   
			\end{bmatrix}
			\end{align*}
			
			\noindent The VAR(1)-computations from above still hold which means 
			that the 
			integrated covariance of $Y$ can be written as
			
			$$
			C_Y = (I_{np} - \Phi_Y)^{-1} \Theta_Y (I_{np} - \Phi_Y)^{-T}.
			$$
			
			\noindent Note that if 1 is an eigenvalue of $\Phi_Y$, then it 
			corresponds to an eigenvector which is the concatenation of $p$ 
			copies 
			of a single $n$-vector. This $n$-vector is also an eigenvector of 
			$\Phi = \sum_{i=1}^p \Phi_i$ with eigenvalue 1 which is a 
			contradiction. This means that 
			$I_{np} - 
			\Phi_Y$ is invertible. We can use Schur complements and the 
			structure 
			of $(I_{np} - 
			\Phi_Y)$ to see 
			that $((I_{np} - \Phi_Y)^{-1})_{1:n,1:n} = (I_n - \Phi)^{-1}$ where 
			$\Phi 
			= 
			\sum_{i=1}^p \Phi_i$. From the sparsity of $\Theta_Y$ it follows 
			that 
			
			$$
			C = (C_Y)_{1:n,1:n} = (I_n - \Phi)^{-1}\Theta (I_n - \Phi)^{-T}.
			$$

		\section{Normalization}
		\label{app:norm}
		
		\begin{exmp}	
			Consider a representation such that $\Phi$ is normalized (i.e., has 
			zeros on the diagonal)
			
			$$
			C = (I_n - \Phi)^{-1}\Theta (I_n - \Phi)^{-T}.
			$$
			
			\noindent For any diagonal matrix such that $D_{ii} \neq 1$ for 
			all $i$,
			
			$$
			C = (D(I_n - \Phi))^{-1}D\Theta D (D(I_n - {\Phi}))^{-T} = 
			(I_n 
			- \bar{\Phi})^{-1}\bar{\Theta}  ((I_n - \bar{\Phi}))^{-T}.
			$$
			
			\noindent If $0 < D_{ii} < 1$, $\Phi$ is nonnegative, and 
			$\rho(\Phi) < 1$, we show that 
			$\rho(\bar{\Phi}) < 1$. To see this note 
			that $\bar{\Phi} = I_n - D + D\Phi$. This is a nonnegative matrix 
			and 
			let $\lambda = \rho(\bar{\Phi})$. A nonzero vector 
			$x$ with nonnegative entries can be 
			chosen such that $\bar{\Phi}x = \lambda x$ \citep[Theorem 
			8.3.1]{horn1985}. $\Phi$ and $x$ have nonnegative entries and $x$ 
			is nonzero and therefore $\Phi x \geq x$ (the inequalities are to 
			be 
			read entrywise) implies that 
			$\rho(\Phi) \geq 
			1$ \citep[Theorem 8.3.2]{horn1985} so $(\Phi x)_i < x_i$ for some 
			$i$. 
			We have $\lambda x = (I_n - D + D\Phi)x$ and therefore $(\lambda 
			x)_i 
			< 
			x_i$ so $\lambda < 1$. 
			We see that $D\Theta D$ is positive 
			definite. This shows that we cannot identify unnormalized direct 
			effects from the integrated covariance matrix as every nonzero 
			entry of $\bar{\Phi}$ is different from the corresponding entry of 
			$\Phi$ (note the diagonal of $\bar{\Phi}$ is nonzero),
			
			$$
			\bar{\Phi}_{ii} = 1 - D_{ii} + \sum_k D_{ik}\Phi_{ki} = 1 - D_{ii} 
			> 0
			$$
			
			\noindent and for $i \neq j$,
			
			$$
			\bar{\Phi}_{ij} = \sum_k D_{ik}\Phi_{kj} = D_{ii}\Phi_{ij} 
			<\Phi_{ij}
			$$
			
			\noindent when $\Phi_{ij}\neq 0$.
			
			For the time series case, note also that
			
			\begin{align*}
				\bar{\Phi}_{BB} = I_b - D_{BB} + (D\Phi)_{BB} = I_b - D_{BB} + 
				D_{BB}\Phi_{BB}
			\end{align*}
			
			\noindent and when $x$ is a nonzero vector such that $x = 
			\bar{\Phi}_{BB}x$ 
			then
			
			\begin{align*}
				x = \bar{\Phi}_{BB}x = (I_b - D_{BB} + 
				D_{BB}\Phi_{BB})x
			\end{align*}
			
			\noindent This implies $D_{BB}x = D_{BB}\Phi_{BB}x$ and $x = 
			\Phi_{BB}x$ so 
			$1$ is an eigenvalue of $\Phi_{BB}$ and therefore $I_b - \Phi_{BB}$ 
			is not invertible which is a contradiction. Therefore $1$ is also 
			not an eigenvalue of $\bar{\Phi}_{BB}$ and it follows that $I_b - 
			\bar{\Phi}_{BB}$ is invertible.
			
			\label{exmp:norm}
			\end{exmp}
			
	\section{Proofs}
	\label{app:proofs}
	
			\begin{proof}[Theorem \ref{thm:uniIV}]
				We have $\rho(\Phi) < 1$, and we define $R = (I_n - \Phi)^{-1} 
				= \sum_{k = 0}^{\infty} \Phi^k$. As $\iota$ is exogenous, we 
				have that $C_{\alpha\iota} = 
				R_{\iota\iota}\Theta_{\iota\iota}R_{\alpha\iota}$ and 
				$C_{\beta\iota} = 
				R_{\iota\iota}\Theta_{\iota\iota}R_{\beta\iota}$. From the 
				definition of an instrumental process, we have 
				$C_{\alpha\iota}\neq 
				0$ and $R_{\alpha\iota}\neq 0$, and therefore 
				$R_{\beta\iota}/R_{\alpha\iota}$ is identified. Using that 
				$\iota$ is an instrumental process and 
				the fact that $I_n 
				= 
				(I_n - \Phi)R$, it follows that $R_{\beta\iota} = 
				\Phi_{\beta\alpha}R_{\alpha\iota} + 
				\Phi_{\beta\beta}R_{\beta\iota}$. Therefore 
				$R_{\beta\iota}/R_{\alpha\iota} = 
				\Phi_{\beta\alpha}/(1 - \Phi_{\beta\beta})$.
				\end{proof}
		
				\begin{proof}[Theorem \ref{thm:multiIV}]
					From exogeneity of $I$, it holds that $C_{BI} = 
					R_{BI}\Theta_{II}R_{II}$ and $C_{AI} = 
					R_{AI}\Theta_{II}R_{II}$. The matrix $C_{AI}$ has full rank 
					and it is therefore invertible since $\vert A\vert = \vert 
					I\vert$. 
					Matrices $R_{AI}$, $\Theta_{II}$, and $R_{II}$ are 
					therefore also invertible. In that case,
					
					$$
					R_{BI}(R_{AI})^{-1} = C_{BI}(C_{AI})^{-1}
					$$
					
					\noindent and therefore $R_{BI}(R_{AI})^{-1}$ is 
					identified. From 
					the 
					definition of $R$, we see that $I_n = (I_n - \Phi)R$ and 
					therefore 
					$R = 
					I_n 
					+ \Phi R$. This means that 
					
					$$
					R_{BI} = \sum_C \Phi_{BC}R_{CI} = \Phi_{BA}R_{AI} + 
					\Phi_{BB}R_{BI}.
					$$
					
					\noindent The last equality uses that $I$ is an 
					instrumental process. 
					We 
					obtain 
					
					$$
					R_{BI}(R_{AI})^{-1} = (I_b - \Phi_{BB})^{-1}\Phi_{BA}.
					$$
					
					\noindent Note that $R_{AI}$ is invertible as noted as 
					above. In 
					the linear Hawkes case, it 
					holds that $\rho(\Phi_{BB}) \leq \rho(\Phi) < 1$ 
					\citep[Corollary 8.1.20]{horn1985} so 
					$I_b - \Phi_{BB}$ is also invertible.
				\end{proof}

	\begin{proof}[Proof of Theorem \ref{thm:genTimeSeries}, general $p$]
				
				We see that
				
				\begin{align*}
					& E(X_{t}^B (X_{t+k}^I)^T) = \\ & E\left(\left(\sum_j 
					\Phi_{j,BA}X_{t-j}^A 
					+ \sum_j \Phi_{j,BB}X_{t-j}^B + 
					g_B(\ldots, 	X_{t-2}^U, X_{t-1}^U, 
					\varepsilon_{t}^U)\right) (X_{t+k}^I)^T 
					\right).
				\end{align*}
				
				\noindent We sum over $k$,
				
				\begin{align*}
					\sum_{k=-\infty}^{\infty}  E(X_{t}^B 
					(X_{t+k}^I)^T) &= 
					\sum_j 
					\Phi_{j,BA} \sum_{k=-\infty}^{\infty} E(X_{t-j}^A 
					(X_{t+k}^I)^T ) 
					\\
					&+ \sum_j \Phi_{j,BB} \sum_{k=-\infty}^{\infty} 
					E(X_{t-j}^B 
					(X_{t+k}^I)^T ) \\
					&= 		\sum_j 
					\Phi_{j,BA} \sum_{k=-\infty}^{\infty} E(X_{t}^A 
					(X_{t+k}^I)^T 
					) 
					\\
					&+ \sum_j \Phi_{j,BB} \sum_{k=-\infty}^{\infty} 
					E(X_{t}^B 
					(X_{t+k}^I)^T ).
				\end{align*}
				
				\noindent From this it follows that $C_{BI} = (I_b - 
				\Phi_{BB})^{-1}\Phi_{BA}C_{AI}$.
	\end{proof}
		
	\section{Numerical Examples}
	\label{app:numerical}
	
	\begin{figure}
		\includegraphics[scale = .5]{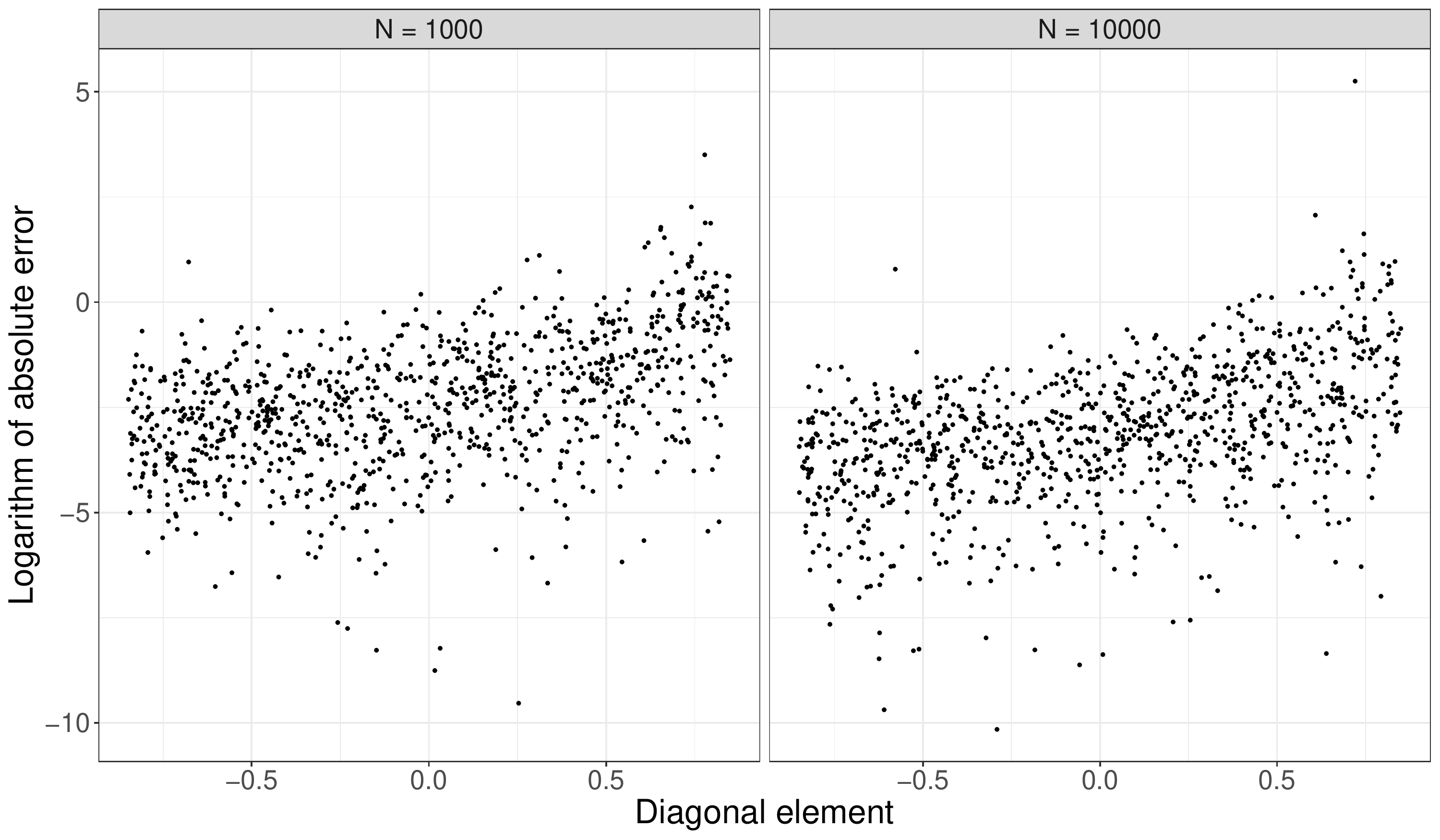}
		\caption{Results from Experiment E7. The vertical axis is the logarithm 
		of $\vert \hat{\theta}_i^7 - {\theta}_i^7 \vert$. The horizontal axis 
		is $\Phi_{BB}$, $B = \{3\}$. As expected from the identifying formula, 
		estimation 
		accuracy 
		deteriorates 
		when this value increases as also reflected in E7 in 
		Table \ref{tab:res}. Note that Experiments E1 and E7 are identical 
		except for the fact that the 
		diagonal elements are sampled from a larger interval in E7. This leads 
		to smaller denominators in the definition of the normalized 
		parameter(s) and worse estimation using a naive estimator.}
		\label{fig:sim}
	\end{figure}
	
	We list results from numerical experiments in this section. In Experiments 
	E1-E7 we generated observations from a 
	single 
	time series of length $N$ and estimated the normalized parameter(s) using 
	the plug-in estimator (see Section \ref{sec:est}). We repeated this $m$ 
	times.
	Table 
	\ref{tab:res} reports empirical mean squared error (MSE), 
	$(1/m)\cdot\sum_{i = 
	1}^m 
	\lVert \hat{\theta}_i^j - {\theta}_i^j \rVert^2 $, where $\lVert \cdot 
	\rVert$ is the Euclidean norm, ${\theta}_i^j$ is the true normalized 
	parameter(s) in the $i$'th run of the $j$'th experiment, and 
	$\hat{\theta}_i^j$ is its estimate. Experiment H1 is a linear Hawkes 
	example. The causal graphs do not include loops, $\alpha\rightarrow\alpha$, 
	but note 
	that all diagonal parameters, e.g., the diagonal of $\Phi$ in a 
	VAR(1)-process, were nonzero. In the time series experiments, we used the 
	{\tt sandwich} package in {\tt R} to estimate the 
	long-run covariance \citep{sandwichRpackage, R}.
	
	\begin{table}
		\centering
		\begin{tabular}{ c c c c c }
			& N & m & MSE & R \\ \hline\hline
			E1 & 1000 & 100 & $0.027$ & $[-1.57,1.45]$ \\ 
			& 10000 & 100 & $0.012$ & $[-1.28,1.63]$ \\  \hline
			E2 & 1000 & 100 & $0.072$ & $[-1.38,1.58]$ \\ 
			& 10000 & 100 & $0.005$ & $[-1.38,1.23]$ \\  \hline
			E3 & 1000 & 100 & $0.955$ & $[-1.70,1.72]$ \\ 
			& 10000 & 100 & $0.061$ & $[-1.68,1.80]$ \\  \hline
			E4 & 1000 & 100 & $0.055$ & $[-1.39,1.44]$ \\ 
			& 10000 & 100 & $0.004$ & $[-1.35,1.86]$ \\  \hline
			E5 & 1000 & 100 & $0.434$ & $[-5.04,6.99]$ \\ 
			& 10000 & 100 & $0.214$ & $[-11.16,8.94]$ \\  \hline
			E6 & 1000 & 100 & $0.203$ & $[1.25,1.25]$ \\ 
			& 10000 & 100 & $0.020$ & $[1.25,1.25]$ \\  \hline
			E7 & 1000 & 1000 & $1.606$ & $[-5.280,6.251]$ \\ 
			& 10000 & 1000 & $36.803$ & $[-6.507,5.276]$ \\  \hline
			H1 &  & 500 & $0.039$ & $[0.26, 0.96]$ \\  \hline 
		\end{tabular}
		\caption{Results from the experiments described in Section 
			\ref{app:numerical}. $N$ is the length of the observed time series 
			from 
			which the estimate is computed and $m$ is the number of 
			repetitions. R is 
			the range of the true normalized parameter(s) over the $m$ 
			runs of each experiment. The true normalized parameter was fixed in 
			Experiment E6. We see reasonably good performance, except for E7 
			where a 
			larger sampling interval for diagonal elements creates very large 
			errors in 
			some instances (see Figure \ref{fig:sim}).}
		\label{tab:res}
	\end{table}
	
	\begin{figure}
		\hspace{.04\linewidth}\begin{minipage}{0.4\linewidth}
			\centering
			\begin{tikzpicture}[scale=0.7]
			\tikzset{vertex/.style = 
				{shape=circle,draw,minimum 
					size=1.5em, 
					inner 
					sep = 0pt}}
			\tikzset{edge/.style = {->,> = latex', 
					thick}}
			\tikzset{edgebi/.style = {<->,> = 
					latex', 
					thick}}
			\tikzset{every loop/.style={min 
					distance=8mm, 
					looseness=5}}
			\tikzset{vertexFac/.style = 
				{shape=rectangle,draw,minimum 
					size=1.5em, 
					inner sep = 0pt}}
			
			\node[vertex] (a) at  (-4,0) {$1$};
			\node[vertex] (b) at  (-4,-2) {$2$};
			\node[vertex] (c) at  (-1,-2) {$3$};
			\node[vertex] (d) at  (-1,0) {$4$};
			\node[vertex] (e) at  (2,0) {$5$};
			\node[vertexFac] (f) at  (2,1.75) {$6$};
			
			\node at (-3.5,1.75) {\Large 
				\textbf{A}};
			
			
			\draw[edge] (a) to (c);
			\draw[edge] (b) to (c);
			\draw[edge] (a) to (d);
			\draw[edge] (b) to (d);
			\draw[edge] (c) to (e);
			\draw[edge] (d) to (e);
			\draw[edge] (f) to (e);
			\draw[edge] (f) to (c);
			\draw[edge] (f) to (d);
			
			\end{tikzpicture}
		\end{minipage}\hfill%
		\begin{minipage}{0.4\linewidth}
			\centering
			\begin{tikzpicture}[scale=0.7]
			\tikzset{vertex/.style = 
				{shape=circle,draw,minimum 
					size=1.5em, 
					inner 
					sep = 0pt}}
			\tikzset{edge/.style = {->,> = latex', 
					thick}}
			\tikzset{edgebi/.style = {<->,> = 
					latex', 
					thick}}
			\tikzset{every loop/.style={min 
					distance=8mm, 
					looseness=5}}
			\tikzset{vertexFac/.style = 
				{shape=rectangle,draw,minimum 
					size=1.5em, 
					inner sep = 0pt}}
			
			\node[vertex] (a) at  (-4,0) {$1$};
			\node[vertex] (b) at  (-4,-2) {$2$};
			\node[vertex] (c) at  (-1,0) {$3$};
			\node[vertex] (d) at  (2,0) {$4$};
			\node[vertexFac] (e) at  (2,1.75) {$5$};
			
			\node at (-3.5,1.75) {\Large 
				\textbf{B}};
			
			
			\draw[edge] (a) to [bend left = 15] 
			(b);
			\draw[edge] (b) to [bend left = 15] 
			(a);
			\draw[edge] (a) to (c);
			\draw[edge] (b) to (c);
			\draw[edge] (d) to [bend left = 25] 
			(c);
			\draw[edge] (c) to [bend left = 0] 
			(d);
			\draw[edge] (e) to (c);
			\draw[edge] (e) to (d);
			\end{tikzpicture}
		\end{minipage}\hspace{.04\linewidth}
		\caption{Graphs from Experiments E3 (graph 
			\textbf{A}) and E4 (graph \textbf{B}) in 
			Section \ref{app:numerical}. Square nodes 
			correspond to unobserved processes.}
		\label{fig:experiments}
	\end{figure}
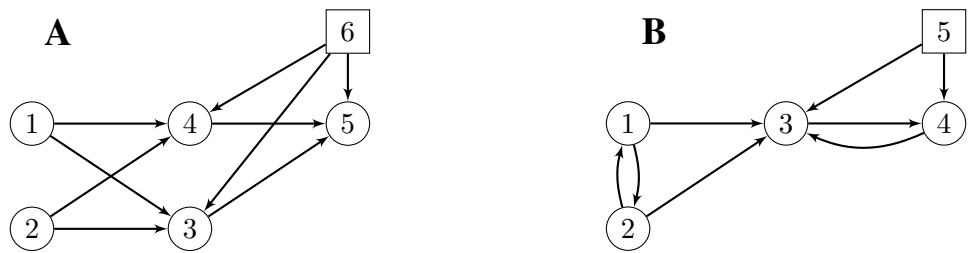

	\paragraph{Experiment E1}
	Four-dimensional VAR(1)-process corresponding to graph \textbf{A} in Figure 
	\ref{fig:IVexample}. Nonzero, nondiagonal VAR(1)-parameters were sampled 
	uniformly in $[-1,-0.2] \cup [0.2,1]$. Diagonal para\-meters were nonzero 
	and 
	sampled 
	uniformly in $[-0.5,.5]$. The parameters were sampled repeatedly until 
	parameters satisfying the stability condition of Subsection \ref{ssec:ts} 
	were obtained.
	
	\paragraph{Experiment E2}
	Four-dimensional VAR(1)-process corresponding to graph \textbf{B} in Figure 
	\ref{fig:IVgraphs}. Nonzero parameters were sampled as in Experiment E1.
	
	\paragraph{Experiment E3}
	Six-dimensional VAR(1)-process corresponding to graph \textbf{A} in Figure 
	\ref{fig:experiments}. Nonzero parameters were sampled as in Experiment E1. 
	The set $\{1,2\}$ is instrumental for the effect from $\{3,4\}$ to $5$.
	
	\paragraph{Experiment E4}
	Five-dimensional VAR(1)-process corresponding to graph \textbf{B} in Figure 
	\ref{fig:experiments}. Nonzero parameters were sampled as in Experiment E1. 
	The set $\{1,2\}$ is instrumental for the effect from $3$ to $4$. We used 
	$W = I_2$.
	
	\paragraph{Experiment E5}
	Four-dimensional VAR(2)-process corresponding to graph \textbf{A} in 
	Figure 
	\ref{fig:IVexample}. Nonzero parameters were sampled as in Experiment E1 
	and 
	sampling was repeated until the parameters satis\-fied $(\Phi_1 + 
	\Phi_2)_{21} \geq 0.2$ and the signs of $(\Phi_1)_{21}$ and 
	$(\Phi_2)_{21}$ were equal.
	
	\paragraph{Experiment E6}
	Four-dimensional time series corresponding to the framework in Section 
	\ref{sec:confoundingts}. Variables 
	$\varepsilon_t^I, \varepsilon_t^A, \varepsilon_t^B$, and $\varepsilon_t^U$ 
	were sampled as independent Gaussian variables with a standard deviation of 
	$0.25$. For each $t$, data was generated as
	
	\begin{align*}
		X_t^I &= -\frac{1}{1 + (X_{t-1}^I)^2} + \varepsilon_t^I, \\
		X_t^U &= \frac{\exp(X_{t-1}^U)}{1+\exp(X_{t-1}^U)} + \varepsilon_t^U, \\
		X_t^A &= -\frac{3}{1 + \exp(X_{t-1}^I)} - 0.5\cdot X_{t-1}^A + 
		X_{t-1}^U\cdot \varepsilon_t^A, \\
		X_t^B &= 0.5\cdot X_{t-1}^A + 0.6\cdot X_{t-1}^B + X_{t-1}^U\cdot 
		\varepsilon_t^B.
	\end{align*}
	
	\paragraph{Experiment E7}
	Four-dimensional VAR(1)-process corresponding to graph \textbf{A} in Figure 
\ref{fig:IVexample}. Nonzero, nondiagonal parameters were sampled as in 
	Experiment E1. Diagonal parameters were nonzero and 
	sampled 
	uniformly in $[-0.85,.85]$.
	
	\paragraph{Experiment H1}
	
	We generated observations from a linear Hawkes process corresponding to 
	graph \textbf{A} in Figure \ref{fig:IVexample} using the {\tt hawkes} 
	package in {\tt R} \citep{hawkesRpackage, R}. In their parametrization, 
	$\beta$-parameters were equal to $1$, and we sampled $\alpha$-parameters 
	uniformly 
	on $[0.2,0.5]$. In a single run, each coordinate process had between 10000 
	and 40000 events.

	\section{A VAR(2)-example}
	\label{app:var2exmp}
	
	We give an example of a VAR(2)-process satisfying the assumptions of the 
	instrumental process method (see also Figure \ref{fig:var2exmp}). We assume 
	that $X_t = (X_t^1, X_t^2, X_t^3, 
	X_t^4)^T$ is a VAR(2)-process such that
	
	\begin{align}
		X_t &= \Phi_1 X_{t-1} + \Phi_2 X_{t-2} + \varepsilon_t \\ 
		&= \begin{bmatrix}
		\Phi_{11}^1       & 0 & 0 & 0 \\
		\Phi_{21}^1       & \Phi_{22}^1 & \Phi_{23}^1 & \Phi_{24}^1 
		\\
		0 & \Phi_{32}^1 & \Phi_{33}^1 & \Phi_{34}^1 \\
		0      & 0 & 0 & \Phi_{44}^1
		\end{bmatrix} X_{t-1} + \begin{bmatrix}
		\Phi_{11}^2       & 0 & 0 & 0 \\
		\Phi_{21}^2       & 0 & 0 & 0 
		\\
		0 & 0 & 0 & \Phi_{34}^2 \\
		0      & 0 & 0 & \Phi_{44}^2
		\end{bmatrix} X_{t-2} + \varepsilon_t.
		\label{eq:var2exmp}
	\end{align}
	
	\noindent We use $\Phi_{ij}^k $ to denote the $(i,j)$-entry of $\Phi_k$. 
	Matrices $\Phi_1$ and $\Phi_2$ are as defined in (\ref{eq:var2exmp}) and 
	$\Phi_k = 0$ for all $k\neq 1,2$. Graph \textbf{B} in Figure 
	\ref{fig:var2exmp} has an edge $X_{k_1}^i 
	\rightarrow X_{k_2}^j$ if $\Phi_{ji}^{k_2-k_1}$ may be nonzero. Graph 
	\textbf{A} is a rolled version of graph \textbf{B} and we have that $i 
	\rightarrow j$ in graph \textbf{A}, $i\neq j$, if and only if there exists 
	$k_1$ and 
	$k_2$ such that $X_{k_1}^i \rightarrow X_{k_2}^j$ in graph \textbf{B}. Let 
	$\Phi = \Phi_1 + 
	\Phi_2$. We 
	see that process $1$ is an instrumental process for $2\rightarrow 3$ 
	(Definition 
	\ref{def:uniIV}), and this means that $\Phi_{32}/(1 - \Phi_{33})$ is 
	identified from the observed integrated covariance if $C_{21} \neq 0$.

								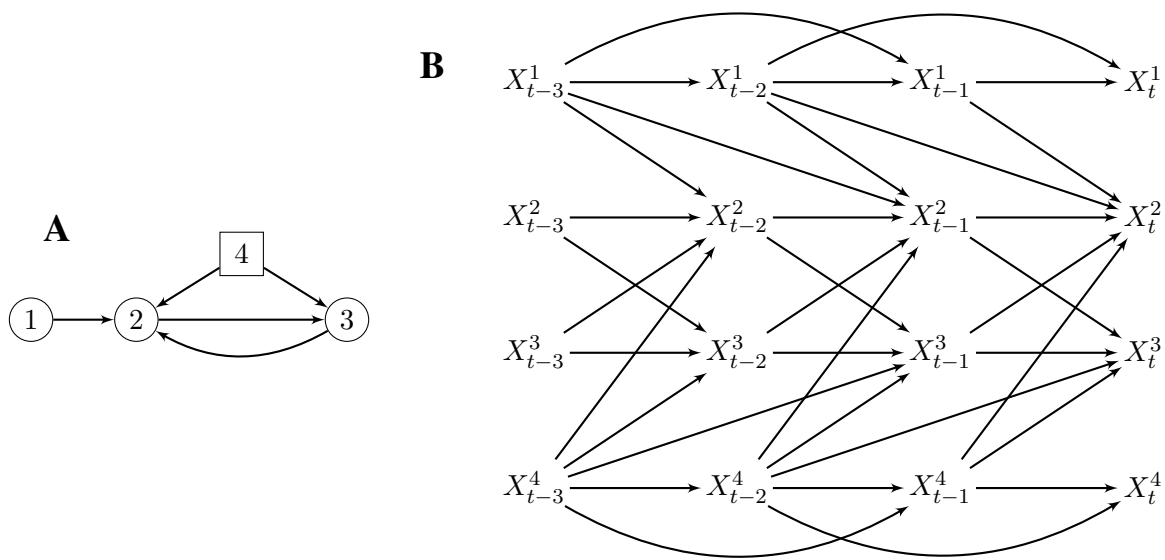
\begin{figure}
									\begin{minipage}[.7\textheight]{0.3\linewidth}
										\centering
										\begin{tikzpicture}[scale=0.7]
										\tikzset{vertex/.style
											= 
											{shape=circle,draw,minimum 
												size=1.5em, 
												inner sep = 0pt}}
										\tikzset{edge/.style= {->,> = 
												latex',thick}}
										\tikzset{edgebi/.style= {<->,> = 
												latex', 
												thick}}
										\tikzset{every 
											loop/.style={min distance=8mm, 
												looseness=5}}
										\tikzset{vertexFac/.style= 	
											{shape=rectangle,draw,minimum 
												size=1.5em, 
												inner sep = 
												0pt}}

										\def\x{-6}
										\node[vertex] (i2) at (-4,0+\x) 	
										{$1$};
										\node[vertex] (a2) at  	(-2,0+\x) {$2$};
										\node[vertex] (b2) 	at  (2,0+\x) 	
										{$3$};
										\node[vertexFac] 	(u2) 	at  	
										(0,1.25+\x)	
										{$4$};
										
										\node at 
										(-3.5,1.75+\x) {\Large 
											\textbf{A}};
										
										
										\draw[edge] (i2) to 
										(a2);
										\draw[edge] (a2) to 
										(b2);				
										\draw[edge] (u2) to 
										(a2);
										\draw[edge] (u2) to 
										(b2);
										\draw[edge, bend left] (b2) to 
										(a2);

										\end{tikzpicture}
									\end{minipage}\hspace{.05\linewidth}%
									\begin{minipage}{0.65\linewidth}
										\centering
										\begin{tikzpicture}[scale=0.9]
										\tikzset{vertex/.style = 
											{shape=circle,minimum 
												size=1.5em, 
												inner 
												sep = 0pt}}
										\tikzset{edge/.style = {->,> = latex', 
												thick}}
										\tikzset{edgebi/.style = {<->,> = 
												latex', 
												thick}}
										\tikzset{every loop/.style={min 
												distance=8mm, 
												looseness=5}}
										\tikzset{vertexFac/.style = 
											{shape=rectangle,draw,minimum 
												size=1.5em, 
												inner sep = 0pt}}
										
										\def\x{3}
										
										\node[vertex] (i1) at  (-2*\x,0) 
										{$X_{t-3}^1$};
										\node[vertex] (i2) at  (-\x,0) 
										{$X_{t-2}^1$};
										\node[vertex] (i3) at  (0,0) 
										{$X_{t-1}^1$};
										\node[vertex] (i4) at  (\x,0) 
										{$X_{t}^1$};
										\node[vertex] (a1) at  (-2*\x,-2) 
										{$X_{t-3}^2$};
										\node[vertex] (a2) at  (-\x,-2) 
										{$X_{t-2}^2$};
										\node[vertex] (a3) at  (0,-2) 
										{$X_{t-1}^2$};
										\node[vertex] (a4) at  (\x,-2) 
										{$X_{t}^2$};
										\node[vertex] (b1) at  (-2*\x,-4) 
										{$X_{t-3}^3$};
										\node[vertex] (b2) at  (-\x,-4) 
										{$X_{t-2}^3$};
										\node[vertex] (b3) at  (0,-4) 
										{$X_{t-1}^3$};
										\node[vertex] (b4) at  (\x,-4) 
										{$X_{t}^3$};
										\node[vertex] (u1) at  (-2*\x,-6) 
										{$X_{t-3}^4$};
										\node[vertex] (u2) at  (-\x,-6) 
										{$X_{t-2}^4$};
										\node[vertex] (u3) at  (0,-6) 
										{$X_{t-1}^4$};
										\node[vertex] (u4) at  (\x,-6) 
										{$X_{t}^4$};

										\node at (-7.5,0.25) {\Large 
											\textbf{B}};
										
										\draw[edge] (i1) to 
										(i2);
										\draw[edge] (i2) to  
										(i3);
										\draw[edge] (i3) to 
										(i4);
										\draw[edge] (a1) to 
										(a2);
										\draw[edge] (a2) to  
										(a3);
										\draw[edge] (a3) to 
										(a4);
										\draw[edge] (b1) to 
										(b2);
										\draw[edge] (b2) to  
										(b3);
										\draw[edge] (b3) to 
										(b4);
										\draw[edge] (u1) to 
										(u2);
										\draw[edge] (u2) to  
										(u3);
										\draw[edge] (u3) to 
										(u4);
										
										\draw[edge] (i1) to 
										(a2);
										\draw[edge] (i2) to  
										(a3);
										\draw[edge] (i3) to 
										(a4);
										\draw[edge] (a1) to 
										(b2);
										\draw[edge] (a2) to  
										(b3);
										\draw[edge] (a3) to 
										(b4);
										\draw[edge] (u1) to 
										(a2);
										\draw[edge] (u2) to  
										(a3);
										\draw[edge] (u3) to 
										(a4);
										\draw[edge] (u1) to 
										(b2);
										\draw[edge] (u2) to  
										(b3);
										\draw[edge] (u3) to 
										(b4);
										\draw[edge] (b1) to 
										(a2);
										\draw[edge] (b2) to  
										(a3);
										\draw[edge] (b3) to 
										(a4);
									\draw[edge] (i1) [bend left] to 
									(i3);
									\draw[edge] (i2) [bend left] to 
									(i4);
									\draw[edge] (i1) to 
									(a3);
									\draw[edge] (i2) to 
									(a4);
									\draw[edge] (u1) [bend right] to 
									(u3);
									\draw[edge] (u2) [bend right] to 
									(u4);
									\draw[edge] (u1) to 
									(b3);
									\draw[edge] (u2) to 
									(b4);
										
										\end{tikzpicture}
									\end{minipage}
									
									\caption{Graphs from the example in Section 
									\ref{app:var2exmp}. Process $4$ is 
									unobserved.}
									\label{fig:var2exmp}
								\end{figure}

			\listoffixmes
			
	\end{document}